\documentclass[12pt,reqno]{amsart}
\calclayout
\usepackage{amsmath,amsthm,amssymb}
\usepackage{mathrsfs}
\usepackage{colonequals}
\usepackage{enumitem}
\usepackage{graphicx,subcaption}
\usepackage{todonotes}
\usepackage{mathtools}
\usepackage{booktabs}
\usepackage[printonlyused,withpage]{acronym}
\usepackage[only,llbracket,rrbracket]{stmaryrd}
\usepackage[justification=centering]{caption}

\usepackage[colorlinks=true,allcolors=blue,pdftitle={Numerical approach to the London equation of superconductivity}]{hyperref}

\usepackage[initials,nobysame,alphabetic,msc-links,backrefs]{amsrefs}
\usepackage{color}

\definecolor{darkgreen}{rgb}{0.13, 0.4, 0.13}

\DeclareMathOperator{\grad}{\nabla}
\DeclareMathOperator{\diver}{div}
\DeclareMathOperator{\curl}{curl}
\DeclareMathOperator{\R}{R}

\newcommand{\lep}{|\log\varepsilon|}
\newcommand{\ep}{\varepsilon}

\newcommand{\RR}{\mathbb R}

\newcommand{\loc}{\mathrm{loc}}

\newcommand{\ga}{\Gamma}

\newtheorem{theorem}{Theorem}

\newtheorem{corollary}{Corollary}[section]
\newtheorem{proposition}{Proposition}[section]
\newtheorem{lemma}[proposition]{Lemma}
\newtheorem{definition}{Definition}[section]

\newtheorem*{defi*}{Definition}

\renewcommand{\P}{\mathbb{P}}

\newcommand{\ex}{\mathrm{ex}}
\newcommand{\hex}{h_\ex}
\newcommand{\Hex}{H_\ex}
\newcommand{\Aex}{A_\ex}
\newcommand{\Hzero}{H_{0}}
\newcommand{\Azero}{A_{0}}
\newcommand{\Uzero}{U_{0}}
\newcommand{\Hzeroex}{H_{0,\ex}}
\newcommand{\Azeroex}{A_{0,\ex}}

\newcommand{\Hcr}{H_{c_1}}
\newcommand{\Bzero}{B_{0}}
\newcommand{\phizero}{\phi_{0}}
\newcommand{\BA}{B_A}
\newcommand{\BAzero}{B_{\Azero}}
\newcommand{\phiA}{\phi_A}
\newcommand{\phiAzero}{\phi_{\Azero}}
\newcommand{\gazero}{\ga_0}

\newcommand{\Hzerone}{\mathring{H}^1(\Omega)}
\newcommand{\HzeroCurl}{\mathring{H}(\curl,\Omega)}

\newcommand{\dsy}{\mathrm{d}s_y}
\DeclareMathOperator{\bcurl}{\textbf{curl}}
\newcommand{\gammatau}{\gamma_{\tau}}
\newcommand{\gammaN}{\gamma_{N}}
\newcommand{\gamman}{\gamma_{\nu}}

\newcommand{\hsolhom}{\dot{H}_{\diver=0}}
\newcommand{\meisconf}{\ensuremath{e^{i \hex \phizero}, \hex \Azero}}

\newcommand{\Hh}{H_{0,h}}
\newcommand{\Bh}{B_{0,h}}

\newcommand{\NED}{\mathrm{NED}}

\usepackage[normalem]{ulem}

\title{Numerical approach to the London Equation of superconductivity}

\author{Nicol\'{a}s Barnafi}
\address{Instituto de Ingenier\'ia Matem\'atica y Computacional \& Facultad de Ciencias Biológicas, Pontificia Universidad Cat\'olica de Chile, Vicu\~na Mackenna 4860, 7820436 Macul, Santiago, Chile; and Centro de Modelamiento Matemático, Santiago, Chile.}
\email{nicolas.barnafi@uc.cl}

\author{Ignacio Labarca-Figueroa}
\address{Instituto de Ingenier\'ia Matem\'atica y Computacional, Pontificia Universidad Cat\'olica de Chile, Vicu\~na Mackenna 4860, 7820436 Macul, Santiago, Chile}
\email{ignacio.labarca@uc.cl}

\author{Carlos Rom\'{a}n}
\address{Facultad de Matem\'aticas \& Instituto de Ingenier\'ia Matem\'atica y Computacional, Pontificia Universidad Cat\'olica de Chile, Vicu\~na Mackenna 4860, 7820436 Macul, Santiago, Chile}
\email{carlos.roman@uc.cl}

\date{\today}
\keywords{London equation, isoflux problem, First critical field, Finite Elements method, Boundary Elements method}
\subjclass{35Q56,65N22,82D55,65N30,78M15}
\thanks{Funding information: ANID FONDECYT 1231593, Centro de Modelamiento Matemático, Proyecto Basal FB210005.}
\begin{document}
\begin{abstract}
In this work, we propose a general discretization strategy for solving the London equation for type-II superconductors in the whole space $\RR^3$. To compute the magnetic field $\Hzero$, we reformulate the problem for the magnetic potential as a transmission problem and discretize it through a nonstandard \ac{fem-bem} coupling. This formulation accounts for the unbounded exterior  domain as well as the interior domain without introducing an artificial truncation.

We then compute the vector field $\Bzero$, which arises from the Helmholtz--Hodge decomposition of the magnetic potential in the superconducting sample. This field enters the isoflux problem, which identifies the curves along which vortex nucleation first becomes energetically favorable in the \ac{gl} model of superconductivity. We recast the equations for $\Bzero$ using the mixed formulation of Kikuchi, in which the divergence-free constraint is imposed weakly, and discretize the resulting problem using a classical $H(\curl)$-conforming \ac{fe} discretization.

We validate our discretization strategy through convergence tests and conclude with an application to the isoflux problem. For a ball under a constant applied magnetic field, the unique maximizer is the diameter aligned with the field. For ellipsoids under a constant applied magnetic field aligned with their major axis, our computations provide numerical evidence of a different behavior in sufficiently elongated, cigar-shaped geometries: off-axis competitors reminiscent of U-shaped vortex configurations attain a larger isoflux ratio than the major axis. Since the major axis is therefore not a maximizer, any off-axis maximizer generates, by rotational symmetry, a continuous family of equivalent configurations, implying non-uniqueness and the presence of a degenerate rotational direction in the isoflux problem.

\end{abstract}
\maketitle 

\subsection*{Acronyms} 

\begin{acronym}
    \acro{amg}[AMG]{Algebraic Multigrid}
    \acro{bcs}[BCS]{Bardeen--Cooper--Schrieffer}
    \acro{bem}[BEM]{Boundary Element Method}
    \acro{bie}[BIE]{Boundary Integral Equation}
    \acro{dofs}[DoFs]{Degrees of Freedom}
    \acro{fe}[FE]{Finite Elements}
    \acro{fem}[FEM]{Finite Element Method}
    \acro{fem-bem}[FEM--BEM]{Finite Element--Boundary Element}
    \acro{gl}[GL]{Ginzburg--Landau}
    \acro{pde}[PDE]{Partial Differential Equation}
    \acrodefplural{pde}[PDEs]{Partial Differential Equation}
\end{acronym}

\section{Introduction}

\subsection{Problem and background}

The analysis of the first critical field $\Hcr$ in the three-dimensional magnetic \ac{gl} model of superconductivity leads to the study of the well-known \emph{London equation} for the magnetic field $\Hzero$,
\begin{equation}\label{Londoneq}
	\curl\curl (\Hzero - \Hzeroex) + \Hzero \mathbf{1}_\Omega = 0 \quad \mbox{in } \RR^3,
\end{equation}
supplemented by the conditions $\lim_{|x|\to \infty} |\Hzero - \Hzeroex | = 0$ and $[\Hzero - \Hzeroex ]_{\partial\Omega} = 0$.  Here, $\Omega$ represents the material sample, which we assume to be a bounded, simply connected subset of $\RR^3$ with $C^2$ boundary, and $\mathbf{1}_\Omega$ denotes its indicator function. The applied magnetic field is assumed to have the form $\Hex\coloneqq \hex\Hzeroex$, where $\Hzeroex$ is normalized so that $\|\Hzeroex\|_{L^\infty(\RR^3)}=1$ and represents its direction, while $\hex>0$ is a tunable parameter representing its intensity. Throughout the paper, we refer to $\Hzeroex$ as the normalized applied magnetic field. Moreover, $[\ \cdot\ ]_{\partial\Omega}$ denotes the jump across $\partial\Omega$. Both $\Hzero$ and $\Hzeroex$ are divergence-free vector fields, in view of their magnetic field structure.

\smallskip

Superconductivity was first discovered in 1911 by Heike Kamerlingh Onnes, who observed that mercury’s electrical resistance vanished abruptly at very low temperatures. This discovery emerged from his pioneering studies of matter at low temperatures, work that earned him the Nobel Prize in Physics in 1913. The experimental breakthrough revealed a new state of matter but initially lacked a theoretical explanation. The London equation, originally formulated in terms of the superconducting current by Fritz and Heinz London in \cite{London1935}, provided the first phenomenological framework for the Meissner effect, namely, the expulsion of magnetic fields from the interior of a superconductor below its critical temperature, with penetration only over a characteristic length scale. This phenomenon is one of the most striking features of superconductivity and underlies magnetic levitation.

\smallskip

The London equation for the magnetic field \eqref{Londoneq} naturally arises in the \ac{gl} framework. The \ac{gl} theory \cite{GinLan} deepened the phenomenological description of superconductivity by introducing an order parameter and a variational framework. Although not microscopic, it has proven to be an extraordinarily powerful theory, successfully accounting for most macroscopic features of superconductors. Moreover, it can be rigorously derived \cite{FHSS} as the limiting form of the microscopic \ac{bcs} theory~\cite{BCS}, in which superconductivity is explained by the formation of Cooper pairs of electrons.

\smallskip

In type-II superconductors, sufficiently strong applied magnetic fields can penetrate the sample through quantized vortex lines. In three dimensions, these vortices take the form of filaments, around which the superconducting current circulates and along whose cores superconductivity is locally lost. The first critical field $\Hcr$ marks the transition, at the level of global minimizers of the \ac{gl} functional, from configurations without vortex filaments for $\hex<\Hcr$ to configurations containing vortex filaments for $\hex>\Hcr$. Equation~\eqref{Londoneq} plays a central role in the computation of $\Hcr$ and in determining where vortex filaments first nucleate within the sample.

\smallskip
In \cite{Rom2}, the last author showed that
\begin{equation}\label{expansionHc1}
	\Hcr=\frac1{2\R_0}\lep+O(\log\lep)\quad \mbox{as}\ \ep \to 0,
\end{equation}
together with a strong characterization of minimizers both below and above $\Hcr$. Here:
\begin{itemize}
	\item $\ep>0$ is the inverse of the \emph{\ac{gl} parameter} $\kappa=\lambda/\xi$, where $\lambda$ is the London penetration depth, the characteristic length over which magnetic fields penetrate the superconductor, and $\xi$ is the coherence length, the characteristic length over which the superconducting order parameter varies. In this notation, type-II superconductors correspond to $0<\ep<\sqrt{2}$, while the asymptotic regime $\ep\to0$ describes extreme type-II superconductivity;
	
	\item the constant $\R_0$ corresponds to the value of the \emph{isoflux problem}, that is,
	$$
	\R_0= \sup_{\ga \in X} \R(\Gamma) \coloneqq\sup_{\ga \in X} \dfrac{1}{|\ga|}\int_\ga \Bzero\cdot d\ell,
	$$
	where $X$ is the set of curves in $\overline \Omega$ that do not self-intersect and that are either loops contained in $\Omega$ or have two distinct endpoints on $\partial \Omega$. The vector field $\Bzero$ is directly related to the magnetic field $\Hzero$ solving \eqref{Londoneq} and is obtained by solving
	\begin{equation}\label{eq:B0problem}
		\left\lbrace
		\begin{array}{rcll}
			\curl\curl \Bzero&=&\Hzero&\mathrm{in}\ \Omega\\
			\diver \Bzero&=&0&\mathrm{in}\ \Omega\\
			\Bzero\times \nu&=&0&\mathrm{on}\ \partial\Omega,
		\end{array}\right.
	\end{equation}
	where hereafter $\nu$ denotes the outer unit normal to $\partial\Omega$. Even though we will not make use of it, it is worth mentioning that it is not hard to prove that $\Bzero$ and $\Hzeroex-\Hzero$ differ only by the gradient of a harmonic function.
\end{itemize}

The last author, together with E. Sandier and S. Serfaty, \cites{RSS,RSS2} strengthened the expansion \eqref{expansionHc1} to
\begin{equation}\label{expansionHc1two}
	\Hcr=\frac1{2\R_0}\left(\lep+C_{\Omega,\gazero}\right)+o(1)\quad \mbox{as}\ \ep \to 0,
\end{equation}
when $\R_0$ is attained by a unique curve $\gazero$ that is non-degenerate in a suitable sense. Here, $C_{\Omega,\gazero}$ is a constant that depends on $\Omega$ and $\gazero$. In addition, above $\Hcr$, vortex filaments appear and exhibit a sequence of transitions, with vortex lines appearing one by one as the intensity of the applied magnetic field is increased: after passing $\Hcr$, there is one vortex; then, after increasing the applied field by an increment of order $\log |\log\ep|$, a second vortex line appears, and so on. These vortex lines accumulate near $\gazero$ and, after a suitable horizontal blow-up around $\gazero$, they minimize a next-order energy in the asymptotic limit $\ep\to 0$.

\smallskip

The only case thus far in which $\gazero$ has been identified is when $\Omega$ is a ball and $\Hzeroex$ is the constant vector field given by the unit vector in the $z$-direction. In this setting, an explicit solution of the London equation, and hence an explicit expression for $\Hzero$, was first obtained in \cite{Lon}. Using this explicit solution, Alama, Bronsard, and Montero subsequently identified $\Bzero$ in \cite{AlaBroMon} and showed that
$$
\Hcr\leq \frac{1}{2\R_0}\lep+o(\lep)\quad \mbox{as}\ \ep \to 0,
$$
in this special situation. This was later strengthened to
$$
\Hcr=\frac{1}{2\R_0}\lep+o(\lep)\quad \mbox{as}\ \ep \to 0,
$$
by Baldo, Jerrard, Orlandi, and Soner in \cite{BalJerOrlSon2}, for arbitrary domains and applied magnetic fields, together with a weak characterization of the behavior of minimizers both below and above $\Hcr$, later refined into the sharper expansions \eqref{expansionHc1} and \eqref{expansionHc1two}, as established in the works cited above.

\smallskip

Obtaining an analytical solution to the London equation~\eqref{Londoneq} for general domains and normalized applied fields remains a challenging problem. The main objective of this paper is therefore to numerically solve~\eqref{Londoneq} under fairly general hypotheses on both $\Omega$ and $\Hzeroex$. The determination of $\Bzero$, which depends on the solution of the London equation, presents similar analytical difficulties. In view of the key role played by $\Bzero$ in the nucleation of vortex filaments in the magnetic \ac{gl} model of superconductivity, we also aim to compute it numerically.

\subsection{Approximation strategy}

For problems posed on bounded domains, a widely used discretization technique is the \ac{fem}; see \cites{monk2003finite,brenner2008mathematical}. A triangulation of the domain leads to a finite-dimensional approximation in which local polynomial spaces are used as an ansatz for the solution. In the case of Maxwell's equations, this approach is a standard tool in computational electromagnetism; see \cite{hiptmair2002finite}.

\smallskip 

The London equation~\eqref{Londoneq}, however, couples the interior field to an exterior field defined on an unbounded domain. We account for the exterior field without discretizing the exterior domain by representing it through boundary integral operators acting on suitable trace data. This approach is standard for exterior problems in computational electromagnetics, particularly in the context of time-harmonic Maxwell equations; see, for example, \cites{hiptmair2002symmetric,buffa2003boundary,sauter2010boundary}. The resulting discretization scheme is known as the \ac{bem}.

\smallskip 

This boundary integral representation naturally leads to a coupled \ac{fem-bem} formulation of the London equation~\eqref{Londoneq}, viewed as a transmission problem between the bounded interior domain and the unbounded exterior domain. Such a formulation requires the discretization only of the field in the interior domain and of suitable data on its boundary. Previous numerical studies of London-type models for finite superconductors have typically incorporated the vacuum field through Biot--Savart or Green-function integral formulations, especially for thin films and SQUID-type devices \cites{Brandt1994,Brandt1996,KhapaevEtAl2003,ClemBrandt2005}. To the best of our knowledge, a three-dimensional $H(\curl)$-conforming \ac{fem-bem} transmission discretization of the bulk London problem in the full exterior domain has not previously been analyzed in this form.

\smallskip

We next turn to the numerical determination of $\Bzero$, using the field $\Hzero$ computed through the coupled \ac{fem-bem} formulation described above. More precisely, $\Bzero$ is recovered by solving the constrained curl--curl problem~\eqref{eq:B0problem}, with $\Hzero$ as its source term. To handle the divergence-free constraint, we introduce a scalar Lagrange multiplier and formulate~\eqref{eq:B0problem} as a mixed saddle-point problem, following the approach of Kikuchi \cite{fumio1987mixed}. The well-posedness of this formulation follows directly from the de Rham complex associated with the standard differential operators. Furthermore, the theory of \ac{fe} Exterior Calculus \cite{arnold2018finite} ensures that standard conforming \ac{fe} schemes yield stable and convergent discretizations. We therefore adopt this strategy and establish a Strang-type estimate that guarantees the approximability of $\Bzero$ when the numerically computed field $\Hzero$, rather than the exact solution of~\eqref{Londoneq}, is used as the source term in~\eqref{eq:B0problem}.

\smallskip

The overall solution strategy that we propose, based on the sequential computation of $\Hzero$ and $\Bzero$, falls within standard \ac{fem} and \ac{bem} frameworks and therefore allows for an efficient and scalable set of algorithms. This involves the use of preconditioned iterative methods for each linear problem, more specifically GMRES \cite{saad1986gmres} with blockwise \ac{amg} iterations \cite{hackbusch2013multi}. We implement this strategy using the FEniCS library \cite{alnaes2015fenics} and the PETSc interface \cite{balay2019petsc} for the numerical linear algebra. The \ac{bem} block is naturally well-conditioned due to the use of second-kind \acp{bie} with a stable duality pairing in trace spaces \cite{buffa2007dual}. Implementation follows through an extension of the Bempp-cl library \cite{Betcke2021} to the setting of static Maxwell boundary integral operators.

\subsection{Application to the isoflux problem}

We conclude by using the computed field $\Bzero$ to provide numerical evidence of a qualitative change in the isoflux problem. For a ball under a constant normalized applied magnetic field, the unique maximizer is the diameter aligned with the field. We then consider ellipsoids whose major axis is aligned with the applied field, so that the major axis plays the corresponding role. Our computations suggest that this curve ceases to be optimal when the ellipsoid is sufficiently elongated. Indeed, we construct off-axis competitors with a larger isoflux ratio, whose geometry points toward the emergence of U-shaped vortex configurations. These competitors lie within two-dimensional sections of the ellipsoid determined by planes containing the major axis, and remain entirely on one side of it. For a representative ellipsoid, Figure~\ref{fig:ellipsoid} shows the competitor with the largest isoflux ratio within the family considered in our computations, illustrating the U-shaped geometry toward which the maximizing curves appear to evolve.
\begin{figure}[ht!]
    \centering
    \includegraphics[width=1\linewidth]{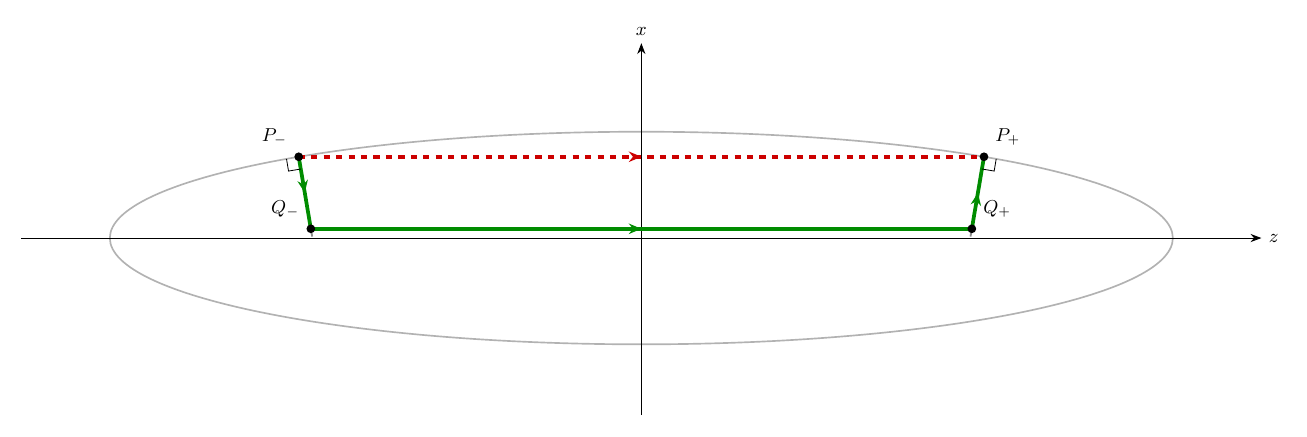}
    \caption{Off-axis competitor attaining the largest isoflux ratio within the family considered, with an isoflux ratio larger than that of the major axis of the ellipsoid and a geometry resembling a U-shaped vortex configuration. See Section~\ref{sec:ellipsoid} for details.}
    \label{fig:ellipsoid}
\end{figure}

\smallskip

The rotational symmetry of the ellipsoid has two distinct consequences. First, rotations of an off-axis configuration generate a continuous family of equivalent configurations, suggesting non-uniqueness in the isoflux problem. Second, the continuous rotational invariance introduces a degenerate direction along this family.

Consequently, the refined analysis of \cites{RSS,RSS2}, where uniqueness and non-degeneracy of the maximizing curve of the isoflux problem are essential assumptions, cannot be applied. This suggests a qualitatively different vortex-nucleation scenario, governed by a degenerate family of competing curves rather than by a single distinguished filament, and points toward the possible emergence of U-shaped configurations analogous to those observed in rotating Bose--Einstein condensates \cites{BEC4,BEC3}; see Section~\ref{sec:ellipsoid} for details.

\subsection*{Plan of the paper} 
The remainder of the paper is organized as follows.  In Section~\ref{sec:derivation}, which can be read independently of the rest of the paper, we derive the London equation from the \ac{gl} model.
Section~\ref{sec:functional} introduces the functional setting and the trace spaces used throughout the paper. In Section~\ref{sec:transmission}, we reformulate the London equation as a transmission problem suitable for \ac{fem-bem} coupling. Section~\ref{sec:numerical} develops the numerical approximation of $\Hzero$ and $\Bzero$ and establishes the convergence of the overall formulation. In Section~\ref{sec:tests}, we present numerical tests assessing the convergence of the proposed method and validate it against the explicit solution available for a ball under a uniform normalized applied field. In Section~\ref{sec:ellipsoid}, we use the computed field $\Bzero$ to explore the isoflux problem in ellipsoidal domains and provide numerical evidence that, for sufficiently elongated geometries, the major axis ceases to be optimal in favor of off-axis competitors resembling U-shaped vortex configurations. The explicit solution used in the numerical validation is presented in Appendix~\ref{sec:solball}, while the layer potentials, boundary integral operators, representation formula, and Calder\'on equations required for the treatment of the exterior problem are collected in Appendix~\ref{sec:BIOs-results}.

\section{Derivation of the London equation}\label{sec:derivation}

In this section, following the presentation in \cite{Rom2}, we recall how the London equation arises from the \ac{gl} model of superconductivity. In an applied magnetic field and at fixed temperature below the critical one, the state of a superconducting sample is modeled by the energy functional
\begin{equation}\label{GLenergy}
    GL_\ep(u,A) \coloneqq \frac12\int_\Omega |\nabla_A u|^2+\frac{1}{2\ep^2}(1-|u|^2)^2+\frac12\int_{\RR^3}|\curl A-\Hex|^2.
\end{equation}
Here, $u:\Omega \rightarrow \mathbb{C}$ is the \emph{order parameter}. Its squared modulus $|u|^2$ represents the local density of Cooper pairs, the paired-electron states that underlie superconductivity in \ac{bcs} theory, and thus encodes the local superconducting state. $A:\RR^3 \rightarrow \RR^3$ is the electromagnetic vector potential generating the induced magnetic field $H \coloneqq \curl A$. $\nabla_A$ denotes the covariant derivative, defined as $\nabla - iA$. 

\smallskip
The \ac{gl} model is a $\mathbb U(1)$ gauge theory, meaning that all physically meaningful quantities are invariant under the gauge transformations 
$$
u\mapsto ue^{i\phi}\quad\mathrm{and}\quad A\mapsto A+\nabla \phi,
$$
where $\phi$ is any sufficiently regular real-valued function. In what follows, we will work on the divergence-free gauge, that is, we assume without loss of generality that $\diver A=0$ in $\RR^3$.

Since $\diver \Hex=0$ in $\RR^3$, in accordance with the nonexistence of magnetic monopoles in Maxwell's theory of electromagnetism, we deduce that there exists a vector potential $\Aex\in H^1_{\loc}(\RR^3,\RR^3)$ such that
$$
\curl \Aex=\Hex\quad\mbox{and}\quad \diver \Aex=0\,\, \mathrm{in}\ \RR^3.
$$
We also define $\Azeroex\coloneqq \hex^{-1}\Aex$.

\smallskip
One of the contributions of \cite{Rom2} is the construction of a precise approximation of the \emph{Meissner state}, that is, the unique (modulo gauge-invariance) solution of the \ac{gl} equations without vortex filaments. This solution corresponds to the global minimizer of \eqref{GLenergy} below the first critical field. The approximating configuration, which is roughly obtained by minimizing \eqref{GLenergy} among configurations such that $|u|=1$, is given by
\begin{equation}\label{meissnerconf}
    (\meisconf),
\end{equation}
where $\Azero$ is the unique minimizer in $[\Azeroex + \hsolhom^1(\RR^3,\RR^3)]$ of the energy functional
\begin{equation*}
    J(A) \coloneqq \frac{1}{2}\int_\Omega |A-\nabla \phiA|^2 + \frac{1}{2}\int_{\RR^3} |\curl(A-\Azeroex)|^2,
\end{equation*}
where $\hsolhom^1(\RR^3,\RR^3)$ denotes the subspace of divergence-free vector fields in the homogeneous Sobolev space $\Dot{H}^1(\RR^3,\RR^3)$, and $\phiA$ is the unique solution in $H^1(\Omega)$ with zero average, that is, $\int_\Omega \phiA = 0$, of the elliptic problem
\begin{equation*}
    \left\{
    \begin{array}{rcll}
        \diver\left(A - \nabla \phiA\right)  &=& 0 &\mathrm{in}\ \Omega\\
        (A-\nabla \phiA) \cdot \nu &=& 0 &\mathrm{on}\ \partial \Omega
    \end{array}
    \right.
    \iff 
    \left\{
    \begin{array}{rcll}
        \Delta\phiA  &=& 0 &\mathrm{in}\ \Omega\\
        \nabla \phiA \cdot \nu &=& A\cdot \nu &\mathrm{on}\ \partial \Omega,
    \end{array}
    \right.
\end{equation*}
where the equivalence comes from $\diver A=0$ in $\RR^3$.
In view of the Helmholtz--Hodge decomposition, the preceding system yields
\begin{equation}\label{b0 existence}
    A-\nabla \phiA = \curl \BA \quad \mbox{in }\Omega
\end{equation}
for a unique vector field $\BA$ satisfying $\diver \BA = 0$ in $\Omega$ and $\BA \times \nu = 0$ on $\partial\Omega$. The phase appearing in \eqref{meissnerconf} is then defined by $\phizero\coloneqq \phiAzero$. We also set $\Bzero=\BAzero$.

\smallskip
The Euler--Lagrange equation solved by the minimizer $\Azero$ of $J$ is given by
\begin{equation}\label{b0 r3 euler lagrange}
    \curl \curl (\Azero - \Azeroex) + \mathbf{1}_\Omega \curl \Bzero = 0\quad \mathrm{in }\ \RR^3.
\end{equation}

Let $\Hzero\coloneqq \curl \Azero$. By taking the curl of \eqref{b0 r3 euler lagrange} in both $\Omega$ and $\RR^3\setminus \overline\Omega$, we obtain the London equation for the magnetic field \eqref{Londoneq}. The condition $\lim_{|x|\to\infty}|\Hzero-\Hzeroex|=0$ follows from the fact that $\Azero\in [\Azeroex + \hsolhom^1(\RR^3,\RR^3)]$.

\smallskip
Let us now derive the boundary condition that $\Hzero$ satisfies on $\partial\Omega$, that is,
\begin{equation}\label{boundarycondH0}
    [\Hzero-\Hzeroex]_{\partial\Omega}=0.
\end{equation}
For this, we let $X$ be an arbitrary smooth vector field with compact support in $\RR^3$. Testing \eqref{b0 r3 euler lagrange} against $X$ and integrating by parts the first term, we find
\begin{equation}\label{bcH1}
\int_{\RR^3} (\Hzero-\Hzeroex)\cdot \curl X+ \int_\Omega \curl \Bzero \cdot X=0.
\end{equation}
Doing the same with $X\mathbf{1}_\Omega$, we obtain
\begin{equation}\label{bcH2}
\int_{\Omega} (\Hzero-\Hzeroex)\cdot \curl X-\int_{\partial\Omega} \left((\Hzero-\Hzeroex)\times \nu \right)\cdot X+ \int_\Omega \curl \Bzero \cdot X=0.
\end{equation}
Analogously, now using $X\mathbf{1}_{\RR^3\setminus \Omega}$ as test vector field, we find
\begin{equation}\label{bcH3}
\int_{\RR^3\setminus \Omega} (\Hzero-\Hzeroex)\cdot \curl X-\int_{\partial(\RR^3\setminus\Omega)}\left((\Hzero-\Hzeroex)\times (-\nu) \right)\cdot X=0.
\end{equation}
Summing \eqref{bcH2} and \eqref{bcH3}, and using \eqref{bcH1}, we deduce that
$$
\int_{\partial\Omega} ( [\Hzero-\Hzeroex]_{\partial\Omega}\times \nu )\cdot X=0,
$$
and hence $[\Hzero-\Hzeroex]_{\partial\Omega}\times \nu=0$. 

\smallskip
On the other hand, we have that 
$$
\diver (\Hzero-\Hzeroex)=0\quad \mbox{in }\RR^3.
$$
Arguing similarly as above, one finds that $[\Hzero-\Hzeroex]_{\partial\Omega}\cdot \nu=0$, and therefore \eqref{boundarycondH0} holds. 

\smallskip
Finally, we observe that \eqref{eq:B0problem} follows by taking the curl of \eqref{b0 r3 euler lagrange} in $\Omega$ and combining the resulting equation with the restriction of \eqref{Londoneq} to $\Omega$.

\section{Functional setting}\label{sec:functional}
We rely on the standard Sobolev space $H^1(\Omega)$ and its natural trace space, denoted by
\begin{equation*}
    H^{1/2}(\partial\Omega)
    \coloneqq
    \gamma_D\bigl(H^1(\Omega)\bigr),
\end{equation*}
where $\gamma_D$ is the Dirichlet trace operator. The dual space of
$H^{1/2}(\partial\Omega)$ is denoted by $H^{-1/2}(\partial\Omega)$.

The vector-valued Sobolev spaces used throughout this work are
\begin{equation*}
    \begin{aligned}
        H(\curl,\Omega)
        &\coloneqq
        \left\{
            U\in[L^2(\Omega)]^3
            \ : \
            \curl U\in[L^2(\Omega)]^3
        \right\},\\
        H(\curl^2,\Omega)
        &\coloneqq
        \left\{
            U\in H(\curl,\Omega)
            \ : \
            \curl U\in H(\curl,\Omega)
        \right\},\\
        H(\diver,\Omega)
        &\coloneqq
        \left\{
            U\in[L^2(\Omega)]^3
            \ : \
            \diver U\in L^2(\Omega)
        \right\}.
    \end{aligned}
\end{equation*}

We define the interior trace operators by
\begin{equation*}
\begin{aligned}
\gammatau^- U
&\coloneqq U\times\nu,
&&
\gammatau^-:
H(\curl,\Omega)
\longrightarrow
H^{-1/2}(\diver_{\partial\Omega},\partial\Omega),\\
\gammaN^- U
&\coloneqq \curl U\times\nu,
&&
\gammaN^-:
H(\curl^2,\Omega)
\longrightarrow
H^{-1/2}(\diver_{\partial\Omega},\partial\Omega),\\
\gamman^- U
&\coloneqq U\cdot\nu,
&&
\gamman^-:
H(\diver,\Omega)
\longrightarrow
H^{-1/2}(\partial\Omega),
\end{aligned}
\end{equation*}
where
\begin{equation*}
    H^{-1/2}(\diver_{\partial\Omega},\partial\Omega)
    \coloneqq
    \left\{
        \lambda\in H^{-1/2}_{\times}(\partial\Omega)
        \ : \
        \diver_{\partial\Omega}\lambda
        \in H^{-1/2}(\partial\Omega)
    \right\}
\end{equation*}
is the natural tangential Sobolev space, as defined in
\cite{buffa2002traces}, and $\diver_{\partial\Omega}$ is the surface divergence operator. Here, we write $\mathbf{H}^{-1/2}_{\times}(\partial\Omega)$ for the dual space of 
$$\mathbf{H}^{1/2}_{\times}(\partial\Omega) \coloneqq \gammatau ([H^1(\Omega)]^3),$$ 
as in \cite{buffa2002traces}*{Section~2} and \cite{buffa2003boundary}*{Definition~1}.

The exterior trace operators
$\gammatau^+$, $\gammaN^+$, and $\gamman^+$
are defined analogously on
$H(\curl,\mathbb{R}^3\setminus\overline{\Omega})$,
$H(\curl^2,\mathbb{R}^3\setminus\overline{\Omega})$, and
$H(\diver,\mathbb{R}^3\setminus\overline{\Omega})$, respectively.

For a vector field $U$ for which the corresponding interior and exterior
traces are well-defined, we denote the jumps across $\partial\Omega$ by
$[\gammatau U]$, $[\gammaN U]$, and $[\gamman U]$.

\section{Transmission formulation of the London equation}
\label{sec:transmission}

To reformulate the London equation~\eqref{Londoneq} in a form suitable for the coupled \ac{fem-bem} formulation developed below, we introduce an auxiliary vector field $\Uzero$. In the interior domain, the purpose of this reformulation is to work directly with the divergence-free component of $\Azero$ arising from the Helmholtz--Hodge decomposition. Indeed, by~\eqref{b0 existence},
\[
\Azero-\nabla\phizero=\curl\Bzero
\quad\mbox{in }\Omega.
\]
Thus, replacing $\Azero$ by $\Azero-\nabla\phizero$ allows us to write the interior equation directly in terms of $\curl\Bzero$, without explicitly retaining the gradient component $\nabla\phizero$.

In the exterior domain, we instead start from $\Azero-\Azeroex$ and introduce an additional gradient correction. More precisely, we define $\psi$ as the unique harmonic extension of $\phizero|_{\partial\Omega}$ to the exterior domain that vanishes at infinity:
\begin{equation*}
\left\{
\begin{array}{rcll}
\Delta\psi&=&0&\mathrm{in}\ \RR^3\setminus\overline{\Omega},\\
\psi&=&\phizero&\mathrm{on}\ \partial\Omega,\\
\displaystyle\lim_{|x|\to\infty}\psi(x)&=&0.&
\end{array}
\right.
\end{equation*}
In particular, $\nabla\psi\in L^2(\RR^3\setminus\overline{\Omega})$. Since $\psi$ and $\phizero$ have the same trace on $\partial\Omega$, their tangential derivatives agree there, and hence
\begin{equation}\label{eq:potential-tangential-traces}
(\nabla\phizero-\nabla\psi)\times\nu=0
\quad\mathrm{on}\ \partial\Omega.
\end{equation}

We then define
\begin{equation}\label{eq:Uzero-definition}
\Uzero\coloneqq
\begin{cases}
\Azero-\nabla\phizero&\text{in }\Omega,\\
\Azero-\Azeroex-\nabla\psi&\text{in }\RR^3\setminus\overline{\Omega}.
\end{cases}
\end{equation}
The exterior component of $\Uzero$ decays at infinity. Moreover, since $\curl\nabla\psi=0$, the gradient correction does not alter the relation between $\curl\Uzero$ and the magnetic field or the exterior equation satisfied by $\Uzero$.
In fact, the magnetic field $\Hzero$ is recovered from $\Uzero$ through
\begin{equation}\label{eq:Hzero-from-Uzero}
\curl\Uzero=\Hzero
\quad\mbox{in }\Omega,
\qquad
\curl\Uzero=\Hzero-\Hzeroex
\quad\mbox{in }\RR^3\setminus\overline{\Omega}.
\end{equation}

The asymmetric definition of $\Uzero$ in~\eqref{eq:Uzero-definition} is deliberate. In the interior domain, we do not subtract $\Azeroex$, as doing so would require incorporating the Helmholtz--Hodge decomposition of this additional term into the formulation. With the present choice, the interior equation has the right-hand side
\begin{equation*}
\mathbf{f}_{\mathrm{ex}}\coloneqq\curl\curl\Azeroex=\curl\Hzeroex,
\end{equation*}
which vanishes, in particular, when the normalized applied magnetic field is constant. In the exterior domain, by contrast, subtracting $\Azeroex$ produces a homogeneous equation. Using the Euler--Lagrange equation~\eqref{b0 r3 euler lagrange}, we therefore obtain
\begin{equation*}
\curl\curl\Uzero=0
\quad\text{in }\RR^3\setminus\overline{\Omega},
\qquad
\curl\curl\Uzero+\Uzero=\mathbf{f}_{\mathrm{ex}}
\quad\text{in }\Omega.
\end{equation*}

It remains to determine the transmission conditions on $\partial\Omega$. From~\eqref{eq:potential-tangential-traces}, we obtain
\begin{equation*}
\begin{aligned}
[\gammatau\Uzero]
&=(\Azero-\Azeroex-\nabla\psi)\times\nu-(\Azero-\nabla\phizero)\times\nu\\
&=-\gammatau\Azeroex
\quad\mathrm{on}\ \partial\Omega.
\end{aligned}
\end{equation*}
Moreover, \eqref{eq:Hzero-from-Uzero} and $[\Hzero-\Hzeroex]_{\partial\Omega}=0$ give
\begin{equation*}
\begin{aligned}
[\gammaN\Uzero]
&=((\Hzero-\Hzeroex)-\Hzero)\times\nu\\
&=-\Hzeroex\times\nu\\
&=-\gammaN\Azeroex
\quad\mathrm{on}\ \partial\Omega.
\end{aligned}
\end{equation*}
Finally, the definition of $\phizero$ gives
\begin{equation*}
\gamman^-\Uzero=(\Azero-\nabla\phizero)\cdot\nu=0
\quad\mathrm{on}\ \partial\Omega,
\end{equation*}
whereas
\begin{equation*}
\gamman^+\Uzero=(\Azero-\Azeroex-\nabla\psi)\cdot\nu
\quad\mathrm{on}\ \partial\Omega.
\end{equation*}
Therefore,
\begin{equation}\label{eq:Uzero-normal-jump}
[\gamman\Uzero]=(\Azero-\Azeroex-\nabla\psi)\cdot\nu
\quad\mathrm{on}\ \partial\Omega.
\end{equation}

Collecting the preceding identities, we conclude that $\Uzero$ satisfies the transmission problem
\begin{equation}\label{eq:Uzero-problem}
\begin{aligned}
\curl\curl\Uzero&=0&&\text{in }\RR^3\setminus\overline{\Omega},\\
\curl\curl\Uzero+\Uzero&=\mathbf{f}_{\mathrm{ex}}&&\text{in }\Omega,\\
[\gammatau\Uzero]&=-\gammatau\Azeroex&&\text{on }\partial\Omega,\\
[\gammaN\Uzero]&=-\gammaN\Azeroex&&\text{on }\partial\Omega,\\
[\gamman\Uzero]&=\gamman^+(\Azero-\Azeroex-\nabla\psi) &&\text{on }\partial\Omega,\\
\lim\limits_{|x|\to\infty}|\Uzero(x)|&=0.&
\end{aligned}
\end{equation}

Although the normal jump~\eqref{eq:Uzero-normal-jump} does not vanish in general, it will not enter the coupled \ac{fem-bem} formulation developed in Section~\ref{sec:numerical}. Indeed, the exterior normal trace appears only through a gradient term in the representation formula for $\Uzero$, which vanishes upon application of the curl trace $\gammaN^+$.

\smallskip 
Thus, the computation of $\Hzero$ reduces to solving the transmission problem~\eqref{eq:Uzero-problem} for $\Uzero$ and subsequently recovering $\Hzero$ from~\eqref{eq:Hzero-from-Uzero}. Once $\Hzero$ has been determined, $\Bzero$ is obtained by solving the constrained curl--curl problem~\eqref{eq:B0problem}, with $\Hzero$ as its source term.

To obtain a well-posed weak formulation of \eqref{eq:B0problem}, we use the Kikuchi formulation \cite{fumio1987mixed} which relies on the homogeneous boundary conditions spaces given by 
        $$
            \begin{aligned}
                \HzeroCurl &\coloneqq \left\{H\in H(\curl,\Omega): H\times \nu = 0\text{ on $\partial\Omega$}\right\}, \\
                \Hzerone &\coloneqq \left\{p\in H^1(\Omega): p = 0\text{ on $\partial\Omega$}\right\},
            \end{aligned}
        $$
    where we have chosen a circle above the $H$ and not a zero below (the more common choice) to avoid confusion with the variable $\Hzero$. The Kikuchi formulation of Maxwell's equations thus reads: Find $\Bzero \in \HzeroCurl$ and $p \in \Hzerone$ such that
\begin{equation}\label{eq:B weak}
  \begin{aligned}
      (\curl \Bzero, \curl B^*) + (\grad p, B^*) &= (\Hzero, B^*) && \forall B^*\in
\HzeroCurl, \\
      (\grad p^*, \Bzero) &=0 && \forall p^* \in \Hzerone.
  \end{aligned}
\end{equation}
This problem is equivalent to \eqref{eq:B0problem}. Its well-posedness follows from \cite{boffi2013mixed}*{Theorem~11.2.1}.
\begin{proposition}[{\cite{boffi2013mixed}*{Theorem~11.2.1}}]
  The bilinear form $b\colon \HzeroCurl\times \Hzerone\to\mathbb{R}$,
  defined by $b(B,p)\coloneqq (\grad p, B)$, satisfies the inf-sup condition
  $$\inf_{p\in \Hzerone\setminus\{0\}}\sup_{B\in \HzeroCurl\setminus\{0\}}
  \frac{b(B,p)}{\|B\|_{H(\curl,\Omega)}\|p\|_{H^1(\Omega)}} \geq \beta > 0.$$
  \begin{proof}
      For any $p\in \Hzerone$, set $B = \grad p$. Since $p|_{\partial\Omega}=0$,
      we have $\nabla_{\partial\Omega} p = 0$ on $\partial\Omega$, and hence
      $\grad p\times\nu = \nabla_{\partial\Omega} p\times\nu = 0$, so that
      $B\in \HzeroCurl$. Moreover, $\curl\grad p = 0$, and therefore
      $\|\grad p\|_{H(\curl,\Omega)} = \|\grad p\|_{L^2(\Omega)}$. Then
      $$\sup_{B\in \HzeroCurl}\frac{b(B,p)}{\|B\|_{H(\curl,\Omega)}}
      \geq \frac{b(\grad p,\,p)}{\|\grad p\|_{H(\curl,\Omega)}}
      = \|\grad p\|_{L^2(\Omega)} \geq c\|p\|_{H^1(\Omega)},$$
      where the last inequality follows from Poincar\'e's inequality.
  \end{proof}
\end{proposition}
Furthermore, the bilinear form $a(B, B^*) \coloneqq (\curl B, \curl B^*)$
is coercive on $\{B \in \HzeroCurl : \diver B = 0\}$. Indeed,
\cite{monk2003finite}*{Corollary~4.8} gives
$$\|B\|_{L^2(\Omega)} \leq C\|\curl B\|_{L^2(\Omega)}$$
for all $B$ in this space, so that
$$a(B,B) = \|\curl B\|^2_{L^2(\Omega)} \geq \frac{1}{1+C^2}\|B\|^2_{H(\curl,\Omega)}.$$
The well-posedness of \eqref{eq:B weak} then follows from the Babu\v{s}ka--Brezzi theory \cite{boffi2013mixed}*{Theorem~11.2.1}.

\section{Numerical Approximation of the London Equation}\label{sec:numerical}
In this section, we provide a numerical framework that yields a provably convergent
approximation of the magnetic field $\Hzero$ and the vector field $\Bzero$. We
formulate Galerkin schemes for \eqref{eq:Uzero-problem} and \eqref{eq:B weak}.
The \ac{fe} approximation of the solutions will be computed on a regular family
of tetrahedral meshes $\mathcal{T}_h$ of $\Omega$, indexed by the mesh size $h$,
satisfying $\Omega_h = \bigcup_{K\in\mathcal{T}_h} K$. The induced triangulation
on $\partial\Omega_h$, consisting of the triangular faces of elements in
$\mathcal{T}_h$ lying on the boundary, is denoted $\mathcal{F}_h$.

Given a polynomial degree $k\geq 1$, on $\mathcal{T}_h$ we consider the Lagrange
space \cite{monk2003finite}*{Section~5.2}
$$\P_k(\mathcal{T}_h)\subset H^1(\Omega)$$
of continuous piecewise polynomials of degree $k$, and the N\'ed\'elec space
\cite{monk2003finite}*{Section~5.5}
$$\NED_k(\mathcal{T}_h)\subset H(\curl,\Omega)$$
of $H(\curl)$ conforming elements with tangential continuity. On the surface mesh
$\mathcal{F}_h$, we consider the Rao--Wilton--Glisson space
$$\mathrm{RWG}(\mathcal{F}_h)\subset
H^{-1/2}(\diver_{\partial\Omega},\partial\Omega).$$
We further introduce a dual barycentric mesh $\widetilde{\mathcal{F}}_h$ of $\partial\Omega_h$,
on which we define the Buffa--Christiansen space \cite{buffa2007dual}
$$\mathrm{BC}(\widetilde{\mathcal{F}}_h)\subset
H^{-1/2}(\diver_{\partial\Omega},\partial\Omega),$$
which serves as the test space in the stable duality pairing
$\langle\cdot,\cdot\rangle_\times$ between
$H^{-1/2}(\diver_{\partial\Omega},\partial\Omega)$ and itself.
\subsection{Approximation of the magnetic field \texorpdfstring{$\Hzero$}{H\textzeroinferior}}

In order to discretize \eqref{eq:Uzero-problem} and solve for $\Hzero$, we aim
to find $\Uzero \in H(\curl, \mathbb{R}^3\setminus \partial \Omega)$ satisfying the interior variational equation 
\begin{equation*}
  \begin{aligned}
      (\curl \Uzero, \curl \Uzero^*) + (\Uzero, \Uzero^*) - \langle \gammaN^- \Uzero,
\gammatau^- \Uzero^*\rangle_{\partial\Omega} &= (\mathbf{f}_{\mathrm{ex}}, \Uzero^*) && \forall
\Uzero^*\in H(\curl,\Omega),
  \end{aligned}
\end{equation*}
and the jump conditions of \eqref{eq:Uzero-problem}
\begin{equation}\label{eq:jump-U}
\begin{aligned}
  [ \gammatau \Uzero ] &= -\gammatau \Azeroex && \text{ on }\partial \Omega,\\
  [ \gammaN \Uzero ]   &= -\gammaN \Azeroex   && \text{ on }\partial \Omega,\\
  [ \gamman \Uzero ]   &= \gamman^+(\Azero-\Azeroex-\nabla\psi)   && \text{ on }\partial \Omega.
\end{aligned}
\end{equation}

For the exterior problem, we use a representation of $\Uzero$ in $ \RR^3 \setminus \overline{\Omega}$ based on integral operators and the fundamental solution of the Laplace problem (see Proposition \ref{prop:rep-formula} in Appendix \ref{sec:BIOs-results}). The representation is given by
    \begin{equation}\label{eq:rep-formula} 
    \begin{aligned}
       \Uzero(x) &= \mathbf{\Psi}_{\mathrm{DL}}(\gammatau^+ \Uzero)(x) - \mathbf{\Psi}_{\mathrm{SL}}(\gammaN^+ \Uzero)(x) + \nabla \Psi_{\mathrm{SL}}(\gamman^+ \Uzero)(x), \qquad \text{ for }x\in \mathbb{R}^3\setminus \overline{\Omega}. 
        \end{aligned}
    \end{equation}
Note that in \eqref{eq:rep-formula} we explicitly observe that a non vanishing exterior normal trace of $\Uzero$ appears as a gradient contribution in the exterior field.
We can obtain boundary integral equations for $\gammatau^+\Uzero $ and $\gammaN^+ \Uzero$ by applying trace operators to \eqref{eq:rep-formula}. Since the curl of a gradient is always zero, by applying $\gamma^+_N$ to \eqref{eq:rep-formula}, we obtain 
\begin{equation}\label{eq:calderon2}
\begin{aligned} 
    -\mathbf{W}_0(\gammatau^+\Uzero) +\left(\tfrac{1}{2}\mathbf{I} + \mathbf{K}_0 \right)(\gammaN^+ \Uzero) = 0,
\end{aligned}
\end{equation}
where we used boundary integral operators as defined in \eqref{eq:BIOs} in Appendix~\ref{sec:BIOs-results}. It becomes clear that we only need information on tangential traces to solve our problem of interest.
These are bounded operators from $H^{-1/2}(\diver_{\partial\Omega}, \partial\Omega) $ to itself.
Using \eqref{eq:calderon2}, we can define a continuous version of a Steklov--Poincaré operator.

The duality pairing that makes $H^{-1/2}(\diver_{\partial\Omega}, \partial\Omega)$ its own dual involves a rotation: for $\lambda, \lambda^*\in H^{-1/2}(\diver_{\partial\Omega}, \partial\Omega),$ we define
\begin{equation*}
    \langle \lambda, \lambda^*\rangle_{\times} \coloneqq \langle \lambda, \nu \times \lambda^*\rangle.
\end{equation*}
We denote $\eta \coloneqq \gammatau^- \Uzero$ and $\lambda \coloneqq \gammaN^- \Uzero. $ Then \eqref{eq:calderon2} combined with the jump conditions \eqref{eq:jump-U} leads to
\begin{equation*}
\begin{aligned} 
    -\mathbf{W}_0\eta +\left(\tfrac{1}{2}\mathbf{I} + \mathbf{K}_0 \right)\lambda = \mathbf{W}_0(\gammatau\Azeroex) -\left(\tfrac{1}{2}\mathbf{I} + \mathbf{K}_0 \right)(\gammaN \Azeroex) \eqqcolon \lambda_{0,\ex}.
\end{aligned}
\end{equation*}
A weak formulation for \eqref{eq:calderon2} reads as follows: we seek $\lambda \in H^{-1/2}(\diver_{\partial\Omega}0, \partial\Omega)$ such that
\begin{equation*}
     -\langle\mathbf{W}_0 \eta , \lambda^*\rangle_{\times} + \langle \left(\tfrac{1}{2}\mathbf{I} + \mathbf{K}_0 \right)\lambda, \lambda^*\rangle_{\times} = \langle \lambda_{0, \ex}, \lambda^*\rangle_\times \qquad \forall\lambda^*\in H^{-1/2}(\diver_{\partial\Omega}0, \partial\Omega),
\end{equation*}
where $H^{-1/2}(\diver_{\partial\Omega}0, \partial\Omega)$ is defined in Appendix~\ref{sec:BIOs-results}, eq.~\eqref{eq:div0-trace-space}.
\begin{lemma}
    The operator $\mathbf{K}_0 : H^{-1/2}(\diver_{\partial\Omega}, \partial\Omega) \rightarrow H^{-1/2}(\diver_{\partial\Omega}, \partial\Omega)$ is compact.
\end{lemma}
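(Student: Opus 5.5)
Throughout, $\mathbf{K}_0$ is the magnetic double-layer operator of Appendix~\ref{sec:BIOs-results}, i.e.\ the static ($\kappa=0$) Maxwell operator obtained from $\gammaN^\pm$ applied to the vector single layer $\mathbf{\Psi}_{\mathrm{SL}}$. Its kernel is built from $\nabla_x G(x,y)$ with $G(x,y)=1/(4\pi|x-y|)$. On a smooth surface one can write, at least formally,
\begin{equation*}
(\mathbf{K}_0\lambda)(x) = \int_{\partial\Omega} \nu(x)\times\bigl(\nabla_x G(x,y)\times\lambda(y)\bigr)\,\dsy .
\end{equation*}
Expanding the double cross product gives two terms: one with $\nabla_x G\,(\nu(x)\cdot\lambda(y))$ and one with $\lambda(y)\,(\nu(x)\cdot\nabla_x G)$. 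In the first, $\nu(y)\cdot\lambda(y)=0$, so $\nu(x)\cdot\lambda(y)=(\nu(x)-\nu(y))\cdot\lambda(y)=O(|x-y|)$. In the second, $\nu(x)\cdot(x-y)=O(|x-y|^2)$ on a $C^2$ surface. Hence both kernels are $O(|x-y|^{-1})$ rather than $O(|x-y|^{-2})$, and $\mathbf{K}_0$ is a pseudodifferential operator of order $-1$ on $\partial\Omega$. This smoothing gain is the source of compactness.

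The plan is to prove that $\mathbf{K}_0$ maps $H^{-1/2}(\diver_{\partial\Omega},\partial\Omega)$ continuously into a strictly smoother space, and then apply a compact embedding. The standard way to organize this is the Hodge decomposition of the trace space (Buffa--Costabel--Sheen),
\begin{equation*}
H^{-1/2}(\diver_{\partial\Omega},\partial\Omega)=\bcurl_{\partial\Omega}H^{1/2}(\partial\Omega)\oplus \nabla_{\partial\Omega}\mathcal{H}(\partial\Omega),
\end{equation*}
where $\mathcal{H}$ consists of scalars $\varphi$ with $\Delta_{\partial\Omega}\varphi\in H^{-1/2}$, so that $\nabla_{\partial\Omega}\mathcal{H}\subset H^{1/2}_{\times}$. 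Compactness then has to be checked in two pieces: for the tangential component of $\mathbf{K}_0\lambda$ and for its surface divergence. The steps are as follows.

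\begin{enumerate}
\item Using the order $-1$ kernel bound (for a $C^2$ boundary, via the usual Lipschitz/$C^{1,1}$ kernel estimates), show that $\mathbf{K}_0\colon H^{-1/2}_\times(\partial\Omega)\to H^{1/2}_\times(\partial\Omega)$ is bounded. Rellich's theorem then gives compactness of the tangential part into $H^{-1/2}_\times$.
\item Compute $\diver_{\partial\Omega}\mathbf{K}_0\lambda$. Following the classical commutator identity of Nédélec/Colton--Kress for $\kappa=0$, I expect
\begin{equation*}
\diver_{\partial\Omega}(\mathbf{K}_0\lambda) = -K_0'(\diver_{\partial\Omega}\lambda) + \text{(remainder)}.
\end{equation*}
Here $K_0'$ is the adjoint scalar double layer, which is of order $-1$ on a $C^2$ surface and therefore compact on $H^{-1/2}(\partial\Omega)$. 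The remainder involves derivatives of $\nu$ against $\lambda$ and should also be of order $-1$ from $H^{-1/2}_\times$ into $H^{-1/2}$. Hence the divergence part is compact as well.
\item Combine the two pieces. If $\lambda_n\rightharpoonup0$ weakly, then $\mathbf{K}_0\lambda_n\to0$ in $H^{-1/2}_\times$ and $\diver_{\partial\Omega}\mathbf{K}_0\lambda_n\to0$ in $H^{-1/2}$. This is exactly strong convergence in the graph norm of $H^{-1/2}(\diver_{\partial\Omega},\partial\Omega)$, which proves compactness.
\end{enumerate}

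The main obstacle is step 2, namely the commutator identity for the surface divergence and the verification that the remainder really gains a derivative. If the direct computation becomes too heavy, I would fall back on citing the known result: the static Maxwell double-layer operator is compact on $H^{-1/2}(\diver_{\partial\Omega},\partial\Omega)$ for smooth ($C^2$) boundaries, as in Buffa--Hiptmair--von Petersdorff--Schwab and \cite{buffa2003boundary}, where it is proved for $C^1$/smooth surfaces. Note that this argument genuinely uses the $C^2$ regularity assumed on $\Omega$; on Lipschitz domains the result is false.
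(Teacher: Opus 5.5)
Your proof is correct and takes a different route from the paper, which gives no argument and simply defers to \cite{buffa2003boundary}. That reference is the Buffa--Hiptmair--von Petersdorff--Schwab paper you list separately; the two are the same work. Its setting is Lipschitz domains, where, as you note, compactness fails, so it is really the smoothness of $\partial\Omega$ that carries the lemma.

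Your reduction to the graph norm of $H^{-1/2}(\diver_{\partial\Omega},\partial\Omega)$ works. The step you single out as the main obstacle closes exactly, with no remainder in the static case. Since $\mathbf{\Psi}_{\mathrm{SL}}\lambda$ is harmonic off $\partial\Omega$, Lemma~\ref{lem:div-SL} gives
\[
\curl\mathbf{\Psi}_{\mathrm{DL}}\lambda=\curl\curl\mathbf{\Psi}_{\mathrm{SL}}\lambda=\nabla\diver\mathbf{\Psi}_{\mathrm{SL}}\lambda=\nabla\Psi_{\mathrm{SL}}(\diver_{\partial\Omega}\lambda)\quad\text{in }\RR^3\setminus\partial\Omega .
\]
Applying Lemma~\ref{lemma:normal-curl} on each side and averaging yields $\diver_{\partial\Omega}(\mathbf{K}_0\lambda)=K_0'(\diver_{\partial\Omega}\lambda)$, with $K_0'\mu\coloneqq\tfrac12(\partial_\nu^{+}+\partial_\nu^{-})\Psi_{\mathrm{SL}}\mu$ the adjoint double layer. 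Your $-K_0'$ just reflects the orientation $\nu\times\cdot$ in your kernel. The only correction terms in the Colton--Kress identity carry a factor $\kappa^2$, so they vanish here.

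In step~1 you need less than you claim. Any gain suffices for Rellich, for example boundedness $H^{-1/2}_\times\to L^2_t$ (dually, the transposed operator mapping $L^2_t$ into $H^{1/2}_\times$). At $C^2$ regularity this follows from your bound $|k(x,y)|\lesssim|x-y|^{-1}$ together with the matching $|x-y|^{-2}$ bound on first surface derivatives of $k$. The pseudodifferential language presupposes a $C^\infty$ surface, and the full gain $H^{-1/2}_\times\to H^{1/2}_\times$ is more than necessary. The Hodge decomposition you mention is never used and can be dropped.

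The trade-off is this. The citation is shorter. Your route is self-contained up to the scalar fact that $K_0'$ is compact on $H^{-1/2}(\partial\Omega)$, and it pinpoints exactly where $C^2$ enters: the cancellations $\nu(x)\cdot\lambda(y)=O(|x-y|)$ and $\nu(x)\cdot(x-y)=O(|x-y|^2)$, and the compactness of $K_0'$. These are precisely what break down on Lipschitz boundaries.
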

We are interested in the coupling: find $(U, \lambda) \in H(\curl, \Omega)\times H^{-1/2}(\diver_{\partial\Omega}0, \partial\Omega) $ such that
\begin{equation}\label{eq:U-coupling}
        \begin{aligned}
            (\curl \Uzero, \curl \Uzero^*) + (\Uzero, \Uzero^*) - \langle \lambda, \gammatau^- \Uzero^*\rangle_{\times}&= (\mathbf{f}_{\mathrm{ex}}, \Uzero^*)  && \forall \Uzero^*\in H(\curl,\Omega),\\
            -  \langle \mathbf{W}_0 \gammatau^- \Uzero, \lambda^*\rangle_{\times}+ \langle \left(\tfrac{1}{2}\mathbf{I} + \mathbf{K}_0 \right)\lambda, \lambda^*\rangle_{\times} &= \langle \lambda_{0,\ex}, \lambda^*\rangle && \forall \lambda^* \in H^{-1/2}(\diver_{\partial\Omega}, \partial\Omega).
        \end{aligned}
    \end{equation}

\begin{definition}[Steklov--Poincaré operator]
The operator $$\mathbf{S}_0 \colon H^{-1/2}(\diver_{\partial\Omega},\partial\Omega)
\to H^{-1/2}(\diver_{\partial\Omega}0,\partial\Omega)$$ is defined by
$$
\mathbf{S}_0 \eta \coloneqq \big(\tfrac12\mathbf I+\mathbf K_0\big)^{-1}\mathbf W_0\,\eta,
\qquad \eta \in H^{-1/2}(\diver_{\partial\Omega},\partial\Omega),
$$
equivalently characterized, for $\eta\in H^{-1/2}(\diver_{\partial\Omega},\partial\Omega)$,
as the unique $\mu \coloneqq \mathbf S_0\eta$ satisfying
$$
\big\langle (\tfrac12\mathbf{I}+\mathbf{K}_0)\mu,\ \mu^*\big\rangle_\times
= \langle \mathbf W_0\eta,\ \mu^*\rangle_\times
\qquad \forall\, \mu^* \in H^{-1/2}(\diver_{\partial\Omega}0,\partial\Omega).
$$
\end{definition}

The reduced, one-field bilinear form
$q\colon H(\curl,\Omega)\times H(\curl,\Omega)\to\mathbb R$ associated
with \eqref{eq:U-coupling} is defined by
$$
q(\Uzero,\Uzero^*) \coloneqq
(\curl \Uzero,\curl \Uzero^*) + (\Uzero,\Uzero^*)
- \big\langle \mathbf S_0\,\gammatau^-\Uzero,\ \gammatau^-\Uzero^*\big\rangle_\times,
\qquad \Uzero,\Uzero^*\in H(\curl,\Omega).
$$

\begin{proposition}[Inf-sup condition for the reduced coupling]
Let $\Omega\subset\mathbb R^3$ be a bounded domain with smooth connected
boundary $\partial\Omega$. The bilinear form $q$ satisfies the
inf-sup condition
$$
\inf_{\substack{\Uzero\in H(\curl,\Omega)\\\Uzero\ne0}}\
\sup_{\substack{\Uzero^*\in H(\curl,\Omega)\\ \Uzero^*\ne0}}\
\frac{q(\Uzero,\Uzero^*)}{\|\Uzero\|_{H(\curl,\Omega)}\,\|\Uzero^*\|_{H(\curl,\Omega)}}
\;\ge\;\beta\;>\;0.
$$
\end{proposition}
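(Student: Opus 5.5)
The plan is to show that $q$ is coercive on $H(\curl,\Omega)$. Coercivity immediately gives the inf-sup condition: taking $\Uzero^*=\Uzero$ yields $\beta$ equal to the coercivity constant. Since $(\curl\Uzero,\curl\Uzero)+(\Uzero,\Uzero)=\|\Uzero\|^2_{H(\curl,\Omega)}$, it suffices to prove that the boundary term has a favorable sign:
$$
-\big\langle \mathbf S_0\,\gammatau^-\Uzero,\ \gammatau^-\Uzero\big\rangle_\times \;\ge\; 0
\qquad\text{for all }\Uzero\in H(\curl,\Omega).
$$
Then $q(\Uzero,\Uzero)\ge\|\Uzero\|_{H(\curl,\Omega)}^2$, so $\beta=1$ works.

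To establish this sign, I will interpret $\mathbf S_0$ as the exterior Dirichlet-to-Neumann map for the magnetostatic problem $\curl\curl W=0$ in $\RR^3\setminus\overline\Omega$ with decay at infinity. Given $\eta=\gammatau^-\Uzero\in H^{-1/2}(\diver_{\partial\Omega},\partial\Omega)$, let $W$ be the exterior field with $\gammatau^+W=\eta$, $\curl\curl W=0$ and $\curl W\in L^2(\RR^3\setminus\overline\Omega)$. Existence and uniqueness of $\curl W$ follow from Lax--Milgram on the appropriate Beppo--Levi quotient space modulo gradients; uniqueness of $\curl W$ needs only that $\partial\Omega$ is connected, since the exterior domain then has trivial second cohomology.

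The first Calder\'on identity \eqref{eq:calderon2} from the representation formula (Appendix~\ref{sec:BIOs-results}) gives $(\tfrac12\mathbf I+\mathbf K_0)\gammaN^+W=\mathbf W_0\gammatau^+W$. Because $\tfrac12\mathbf I+\mathbf K_0$ is injective on $H^{-1/2}(\diver_{\partial\Omega}0,\partial\Omega)$ (by the preceding Lemma, $\mathbf K_0$ is compact, so it is Fredholm of index zero, and injectivity follows from uniqueness for the exterior problem), we conclude $\mathbf S_0\eta=\gammaN^+W$. The range lies in the divergence-free traces because $\diver_{\partial\Omega}(\curl W\times\nu)=-\curl W\cdot\nu\cdot$(sign), which equals the normal component of a curl of a tangentially-controlled field.

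Next I apply Green's formula for the curl in the exterior domain, truncated by a large ball $B_R$, and let $R\to\infty$. Decay of $\curl W$ like $|x|^{-2}$ and of $W$, modulo gradients, like $|x|^{-1}$ makes the sphere term vanish. The outward normal of the exterior domain is $-\nu$, which gives
$$
\int_{\RR^3\setminus\overline\Omega}|\curl W|^2 \;=\; \pm\big\langle \gammaN^+W,\ \gammatau^+W\big\rangle_\times .
$$
I will check the sign against the interior identity already used in the interior variational equation, $(\curl U,\curl U^*)-\langle\gammaN^-U,\gammatau^-U^*\rangle=(\curl\curl U,U^*)$. Flipping the normal should give $\langle \mathbf S_0\eta,\eta\rangle_\times=-\|\curl W\|^2_{L^2(\RR^3\setminus\overline\Omega)}\le0$, which is exactly the required sign. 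This is consistent with the coupled system \eqref{eq:U-coupling}, which is the weak form of the full-space energy $\int_\Omega(|\curl U|^2+|U|^2)+\int_{\RR^3\setminus\overline\Omega}|\curl U|^2$.

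The main obstacle is making this identification rigorous. It has three parts: justifying that $\mathbf S_0$ really is the DtN map, including the gauge freedom of $W$ and the fact that the gradient term in \eqref{eq:rep-formula} is annihilated by $\gammaN^+$; the decay estimates at infinity; and the careful bookkeeping of signs and rotations in $\langle\cdot,\cdot\rangle_\times$. I would do this first for smooth $\eta$ and then extend by density and continuity of $\mathbf S_0$.

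If the sign bookkeeping turned out to give only a G\aa rding inequality instead (a compact perturbation coming from $\mathbf K_0$), the fallback is the standard Fredholm argument. A G\aa rding inequality plus injectivity gives the inf-sup condition, and injectivity follows because $q(U,U^*)=0$ for all $U^*$ gives a solution of the homogeneous transmission problem. Its energy $\|U\|^2_{H(\curl,\Omega)}+\|\curl W\|^2$ then vanishes, so $U=0$.
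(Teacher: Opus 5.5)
Your proposal is correct relative to the identities the paper states, and it takes a genuinely different route from the paper.

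\textbf{The paper's route.} The paper never identifies $\mathbf S_0$ with a Dirichlet-to-Neumann map. It uses compactness of $\mathbf K_0$ to replace $\mathbf S_0$ by its principal part $2\mathbf W_0$. It then tests the resulting form $q_0$ with $\Uzero+V_0$, where $V_0$ is the $H(\curl,\Omega)$-Riesz representative of $\langle 2\mathbf W_0\gammatau^-\Uzero,\gammatau^-(\cdot)\rangle_\times$. This gives $q_0(\Uzero,\Uzero+V_0)=\|\Uzero\|^2_{H(\curl,\Omega)}-\|V_0\|^2_{H(\curl,\Omega)}$, and the proof closes with a Steinbach-type contraction bound $\|V_0\|\le c\|\Uzero\|$, $c<1$.

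\textbf{Your route.} You read $\mathbf S_0$ off \eqref{eq:calderon2} as the exterior DtN map and apply the exterior Green formula. The Green-formula sign you left open comes out as you predicted. With $\gammaN W=\curl W\times\nu$, $\langle\lambda,\mu\rangle_\times=\langle\lambda,\nu\times\mu\rangle$ and normal $-\nu$ on the exterior side, one gets $\langle\gammaN^+W,\gammatau^+W\rangle_\times=-\|\curl W\|^2_{L^2(\RR^3\setminus\overline\Omega)}$. Hence, once $\mathbf S_0\eta=\gammaN^+W$, you have $q(\Uzero,\Uzero)=\|\Uzero\|^2_{H(\curl,\Omega)}+\|\curl W\|^2_{L^2(\RR^3\setminus\overline\Omega)}$ and $\beta=1$.

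\textbf{What each buys.} Your argument gives an explicit constant and exhibits $q$ as the full-space magnetic energy. It also avoids two soft spots in the paper's proof. First, a compact perturbation preserves Fredholmness but not an inf-sup constant, so going from $q_0$ back to $q$ needs an injectivity argument. Second, the contraction constant has to actually be produced. The paper's argument is modeled on Johnson--N\'ed\'elec/Steinbach analyses of the nonsymmetric coupling. Those are designed to carry over to the RWG/BC Galerkin system, where $\mathbf S_0$ is never formed exactly; your coercivity is a statement about the exact $\mathbf S_0$ only. The cost of your route is the DtN identification: exterior solvability in a gauge where the representation formula applies, plus decay at infinity.

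\textbf{Caution on the sign.} The sign is the whole content, and your G\aa rding fallback cannot repair it. In your approach $\mathbf K_0$ never enters as a perturbation. If $\mathbf S_0$ were \emph{minus} the DtN map, you would get $q(\Uzero,\Uzero)=\|\Uzero\|^2_{H(\curl,\Omega)}-\|\curl W\|^2$. On the unit ball this form is nonnegative but degenerates along the toroidal fields $\mathbf x\times\nabla(r^\ell Y_\ell)$, with $Y_\ell$ a spherical harmonic of degree $\ell$: there $q(\Uzero,\Uzero)\le C\ell^{-1}\|\Uzero\|^2_{H(\curl,\Omega)}$. By symmetry and Cauchy--Schwarz, neither a G\aa rding inequality nor inf-sup can then hold.

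So \eqref{eq:calderon2} deserves an actual check rather than being taken on faith. With $\gammatau U=U\times\nu$ and jumps taken exterior minus interior as in Section~\ref{sec:transmission}, Amp\`ere's law gives $[\gammatau\mathbf{\Psi}_{\mathrm{DL}}\mu]=-\mu$. On the exterior dipole $\Uzero=\frac{M}{r^2}\sin\phi\,\hat\theta$ of Appendix~\ref{sec:solball} one finds $\mathbf W_0\gammatau^+\Uzero+(\tfrac12\mathbf I+\mathbf K_0)\gammaN^+\Uzero=0$. That is the opposite sign in front of $\mathbf W_0$. Your proof therefore establishes the proposition with $\mathbf S_0$ taken to be the exterior DtN map, which is the operator that makes the reduced problem equivalent to \eqref{eq:Uzero-problem}. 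With the stated conventions that operator is $-(\tfrac12\mathbf I+\mathbf K_0)^{-1}\mathbf W_0$.

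\textbf{Side justifications.} Some of these are off, although the claims hold.
\begin{itemize}
\item Uniqueness of $\curl W$ needs no topology; it is the energy identity with $\gammatau^+W=0$.
\item Invertibility of $\tfrac12\mathbf I+\mathbf K_0$ on $H^{-1/2}(\diver_{\partial\Omega}0,\partial\Omega)$, which the definition of $\mathbf S_0$ presupposes, does not follow from that uniqueness. A kernel element $\mu$ makes $\mathbf{\Psi}_{\mathrm{DL}}\mu$ curl-free on both sides, so what is needed is the absence of the corresponding harmonic fields. This is where the topology of $\Omega$ and $\partial\Omega$ enters.
\item The range statement is just Lemma~\ref{div0}: $\diver_{\partial\Omega}(\curl W\times\nu)=\curl\curl W\cdot\nu=0$.
\end{itemize}
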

\begin{proof}
Since $\mathbf{K}_0$ is a compact operator from $H^{-1/2}(\diver_{\partial\Omega}, \partial\Omega)$ to itself, the operator $\mathbf{S}_0$ has a principal part corresponding to $2\mathbf{W}_0$. Then, it is enough to show that the bilinear form
$$
q_0(\Uzero,\Uzero^*) \coloneqq
(\curl \Uzero,\curl \Uzero^*) + (\Uzero,\Uzero^*)
- \big\langle 2\mathbf W_0\,\gammatau^-\Uzero,\ \gammatau^-\Uzero^*\big\rangle_\times,
\qquad \Uzero,\Uzero^*\in H(\curl,\Omega).
$$
satisfies an inf-sup condition with a constant $\beta_0.$ Choose a test function $\Uzero^* = \Uzero + V_0\in H(\curl,\Omega).$ Let $V_0$ be the solution of
\begin{equation}\label{eq:V0-aux}
    (\curl V_0, V_0^*) + (V_0, V_0^*) = \langle 2\mathbf{W}_0\gammatau^-\Uzero, V_0^*\rangle_\times \qquad \forall V_0^*\in H(\curl,\Omega).
\end{equation}
Then, from \eqref{eq:V0-aux}, we can rewrite
\begin{equation*}
    \begin{aligned}
        q_0(\Uzero,\Uzero +V_0) &= \|U_0\|_{H(\curl,\Omega)}^2 + (\curl\Uzero, \curl V_0) + (\Uzero, V_0) \\ 
        &- \big\langle 2\mathbf W_0\,\gammatau^-\Uzero,\ \gammatau^-\Uzero\big\rangle_\times -\big\langle 2\mathbf W_0\,\gammatau^-\Uzero,\ \gammatau^-V_0\big\rangle_\times
    \end{aligned}
\end{equation*}
into
$$
q_0(\Uzero,\Uzero +V_0) = \|\Uzero\|_{H(\curl,\Omega)}^2 - \|V_0\|_{H(\curl,\Omega)}^2.
$$
Hence, it remains to show that $\|V_0\|_{H(\curl,\Omega)}\leq c \|\Uzero\|_{H(\curl,\Omega)}<\|\Uzero\|_{H(\curl,\Omega)}$ for some $c\in(0,1)$, which follows from a standard contraction argument involving the operator $\mathbf{K}_0$; see \cites{elasmi2021johnson,steinbach2011note}.
\end{proof}

In order to have a stable discrete duality pairing $\langle \cdot, \cdot\rangle_\times$ in $H^{-1/2}(\diver_{\partial\Omega}, \partial\Omega)$, we rely on the pairing of $\mathrm{RWG}(\mathcal{F}_h)$ (for trial) and $\mathrm{BC}(\widetilde{\mathcal{F}}_h)$  (for test) boundary element spaces. Therefore, we solve for $(U_{0, h}, \lambda_h)\in \mathrm{NED}(\mathcal T_h)\times \mathrm{RWG}(\mathcal F_h)$ such that
\begin{equation*}
        \begin{aligned}
            (\curl U_{0, h}, \curl U_{0, h}^*) + (U_{0, h}, U_{0, h}^*) - \langle \lambda_h, \gamma_{\tau}^- U_{0, h}^*\rangle_{\times}&= (\mathbf{f}_{\ex}, U^*_{0,h}) && \forall U_{0, h}^*\in \mathrm{NED}_k(\mathcal T_h),\\
            -  \langle \mathbf{W}_0 \gamma_\tau^- U_{0, h}, \lambda_h^*\rangle_{\times}+ \langle \left(\tfrac{1}{2}\mathbf{I} + \mathbf{K}_0 \right)\lambda_h, \lambda_h^*\rangle_{\times} &= \langle \lambda_\ex, \lambda_h^*\rangle && \forall \lambda_h^* \in \mathrm{BC}(\widetilde{\mathcal{F}}_h).
        \end{aligned}
    \end{equation*}

\subsection{Approximation of the vector field \texorpdfstring{$\Bzero$}{B\textzeroinferior}}
Consider the conforming \ac{fe} spaces
    \begin{align*}
        V_h^B &\coloneqq \NED_k(\mathcal T_h) \cap \HzeroCurl, \\
        V_h^p &\coloneqq \P_k(\mathcal T_h) \cap \Hzerone.
    \end{align*}
The discrete Galerkin formulation then looks for $(\Bh, p_h)$ in $V_h^B \times V_h^p$ such that
    \begin{equation}\label{eq:Bh}
        \begin{aligned}
            (\curl \Bh, \curl \Bh^*) + (\grad p_h, \Bh^*) &= \langle \Hzero, \Bh^*\rangle && \forall \Bh^*\in V_h^B, \\
            (\grad p_h^*, \Bh) &=0 && \forall p_h^* \in V_h^p.
        \end{aligned}
    \end{equation}
\begin{lemma}\label{lemma:kikuchi wp}
    Problem \eqref{eq:Bh} is well-posed. 
    \begin{proof}
        This result is shown in \cite{boffi2013mixed}*{Theorem 11.2.2} and its proof relies on the Babu\v ska--Brezzi theory. Indeed, the discrete spaces have been chosen such that $\grad V_h^p \subset V_h^B$, which immediately yields the discrete inf-sup stability of the bilinear form $(\grad p_h^*, \Bh)$. Ellipticity of the positive part $(\curl \Bh, \curl \Bh^*)$ is a consequence of the discrete Friedrichs inequality.
    \end{proof}
\end{lemma}
The Ladysenskaya--Babu\v{s}ka--Brezzi theory establishes the convergence of the proposed \ac{fe} scheme for \eqref{eq:Bh}, which yields the following result: 

\begin{theorem}
    If $\Bzero$ belongs to $H^s(\curl, \Omega)$ with $1/2 <  s \leq k$, then the solutions $\Bh$ and $\Bzero$ satisfy the following error estimate: 
    $$ \| \Bzero - \Bh \|_{H_0(\curl,\Omega)} \leq C\| \Bzero \|_{H^s(\curl,\Omega)} h^s,$$
    for some generic mesh-independent constant $C$.
    \begin{proof}
        This is a standard consequence of the Ceá estimate satisfied by the solution of \eqref{eq:Bh} as in \cite{boffi2013mixed}*{Corollary 11.2.1} and the convergence properties of the Nédélec interpolator \cite{monk2003finite}*{Theorem 5.41}.
    \end{proof}
\end{theorem}

\subsection{Convergence of the overall formulation}
Given that each sub-problem (\eqref{eq:Uzero-problem} and \eqref{eq:B weak}) enjoys standard optimal convergence rates (under sufficient regularity), the final concern regards the convergence of the entire procedure, i.e. the computations
    \begin{enumerate}
        \item of the magnetic field $\Hh$,
        \item and of the vector field $\Bh \coloneqq \Bh(\Hh)$. 
    \end{enumerate}
We now proceed to derive a Strang estimate, which directly establishes the results we are looking for.

\subsection{Strang estimate for \texorpdfstring{$\Bh$}{B\textzeroinferior,ₕ}}
We modify the formulation of \eqref{eq:Bh} to have a mesh-dependent right hand side by replacing $\Hzero$ with $\Hh$. Then, the errors $e_h^B\coloneqq \Bzero - \Bh$, $e_h^H\coloneqq \Hzero-\Hh$, and $e_h^p\coloneqq p - p_h$ satisfy the equation
    \begin{equation}\label{eq:Bh strang error}
        \begin{aligned}
            (\curl e_h^B, \curl \Bh^*) + (\grad e_h^p, \Bh^*) &= (\curl e_h^H, \curl \Bh^*) && \forall \Bh^* \in V_h^B \\
            (\grad p_h^*, e_h^B)  &= 0 && \forall p_h^* \in V_h^p.
        \end{aligned}
    \end{equation}
This yields the following convergence result. 
\begin{lemma}
    If the error in the magnetic field converges as $\| e_h^H \|_{\curl}\leq C h^k$, then the following convergence estimate holds: 
    $$ \| \Bzero - \Bh \|_{\curl{}} \leq C h^k  . $$
    \begin{proof}
        Consider $e_h^B$ the solution to \eqref{eq:Bh strang error}, which is guaranteed to exist in virtue of Lemma~\ref{lemma:kikuchi wp}. Then, the discrete Friedrich inequality yields the existence of some constant $c>0$ such that
        $$ c\| e_h^B \|_{\curl{}} \leq \|\curl e_h^B\|_{L^2}. $$
        This, together with the previously used fact that $p_h = p = 0$ \cite{boffi2013mixed} yields that
        $$ c^2\| e_h^B \|^2_{\curl{}} \leq \|\curl e_h^B \|_{L^2}^2 \leq (\curl e_h^H, \curl e_h^B) \leq \| e_h^H \|_{\curl{}} \| e_h^B \|_{\curl{}}, $$
        which establishes the desired result.
    \end{proof}
\end{lemma}

\section{Numerical tests}\label{sec:tests}
In this section we propose numerical tests to validate our convergence theory. These tests are:
    \begin{enumerate}
        \item Validation of the convergence rates computed in Section~\ref{sec:numerical} for the variables $\Hh$ and $\Bh$ using a manufactured-solution approach.
        \item Theoretical validation test, in which we solve for $\Bzero$ on a ball and compare the numerical solution to the analytical one available in Appendix~\ref{sec:solball}.
    \end{enumerate}
\subsection{Numerical convergence}

\subsubsection{$\Uzero$ problem}
In this section, we validate the convergence of problem \eqref{eq:U-coupling} using a manufactured-solution approach. For this, we consider an analytic solution, given by 
    $$ \Uzero(x,y,z) = \begin{bmatrix} e^x\sin(y) \\ e^x\cos(y) \\ 0 \end{bmatrix},$$
in $\Omega$ and zero in the exterior. We compensate the right hand side accordingly for this study in $\Omega$. The results are reported in Table~\ref{tab:U-rates}, where the convergence rates expected from the theory are observed. The definitions used are given by
    \begin{equation*}
    \begin{aligned}
        e^h_{U} \coloneqq \|\Uzero - U_{0,h} \|_{H(\curl,\Omega)}, \qquad  r_{U} \coloneqq \frac{\log \frac{e_{U}^h}{e_{U}^{h'}} }{\log \frac{h}{h'}}.
    \end{aligned}
    \end{equation*}

\begin{table}[ht!]
    \centering
    \begin{tabular}{c|c c}
\toprule DoFs & $e^h_U$ & $r_U$  \\  \midrule
  1043   & 1.66e-01 & --   \\
  1345   & 1.46e-01 & 1.13 \\
  3764   & 1.05e-01 & 0.91 \\
  25362  & 5.54e-02 & 0.98 \\
  48409  & 4.48e-02 & 1.08 \\
  111917 & 3.33e-02 & 0.96 \\
  187186 & 2.80e-02 & 1.02 \\ \bottomrule
    \end{tabular}
    \caption{Convergence test: Manufactured solutions for $\Uzero$.}
    \label{tab:U-rates}
\end{table}

\subsubsection{$\Bzero$ problem} 

In this section, we provide a manufactured solution to problem \eqref{eq:Bh} and study the convergence with respect to it. The analytical solution we consider is 
    \begin{equation*}
        \begin{aligned}
            B(x,y) = \begin{bmatrix}x^2y - y^2 \\ x^2 - xy^2 \end{bmatrix}       
        \end{aligned},
    \end{equation*}
which we impose on the entire boundary as a Dirichlet boundary condition. Note that the analytical pressure is $p=0$. We perform this for a uniform refinement of the mesh and obtain the optimal rates reported in Table~\ref{tab:B-rates}, as expected from the theory. The discrete pressure is zero up to machine precision. Note that, in the table, we have used the following definitions:
    \begin{equation*}
    \begin{aligned}
        e^h_B &\coloneqq \|B - B_h\|_{H(\curl, \Omega)}, &\quad&
        e^h_p \coloneqq \|p_h\|_{H^1(\Omega)},  \\
        r_B &\coloneqq \frac{\log \frac{e_B^h}{e_B^{h'}} }{\log \frac{h}{h'}}, &\quad&
        r_p \coloneqq \frac{\log \frac{e_p^h}{e_p^{h'}} }{\log \frac{h}{h'}}.
    \end{aligned}
    \end{equation*}

\begin{table}[ht!]
    \centering
    \begin{tabular}{c|c c | c c}
\toprule DoFs & $e^h_B$ & $r_B$ & $e^h_p$ & $r_p$  \\ \midrule
25 & 3.00e-01 & -- & 3.09e-16 & -- \\
81 & 1.73e-01 & 0.80 & 1.11e-14 & -5.17 \\
289 & 9.33e-02 & 0.89 & 1.17e-14 & -0.07 \\
1089 & 4.85e-02 & 0.94 & 3.02e-13 & -4.69 \\
4225 & 2.48e-02 & 0.97 & 4.07e-12 & -3.75 \\
16641 & 1.25e-02 & 0.99 & 4.45e-11 & -3.45 \\ \bottomrule
    \end{tabular}
    \caption{Convergence test: Manufactured solutions for $\Bzero$.}
    \label{tab:B-rates}
\end{table}

\subsection{Theoretical validation with an analytic solution}
In this section, we study the convergence of the solution in a curved domain, namely the unit ball, against a well-known analytic solution for $\Hzeroex=(0,0,1)$, with $\Azeroex(x,y,z) = \frac 1 2(y,-x,0)$. The full expression is provided in Appendix \ref{sec:solball}, and we provide the convergence for the problem's variables in Table~\ref{tab:UB-rates}. Note that in the table, we have used the following definitions:
    \begin{equation*}
    \begin{aligned}
        e^h_{\Bzero} &\coloneqq \|\Bzero - B_{0,h} \|_{H(\curl, \Omega)}, &\quad&
        e^h_{\Uzero} \coloneqq \|\Uzero - U_{0,h} \|_{H(\curl,\Omega)},  \\
        r_{\Bzero} &\coloneqq \frac{\log \frac{e_{\Bzero}^h}{e_{\Bzero}^{h'}} }{\log \frac{h}{h'}}, &\quad&
        r_{\Uzero} \coloneqq \frac{\log \frac{e_{\Uzero}^h}{e_{\Uzero}^{h'}} }{\log \frac{h}{h'}}.
    \end{aligned}
    \end{equation*}

\begin{table}[ht!]
    \centering
    \begin{tabular}{c|c c | c c}
\toprule DoFs & $e^h_{\Uzero}$ & $r_{\Uzero}$ & $e^h_{\Bzero}$ & $r_{\Bzero}$  \\ \midrule
1043 & 2.45e-02 & -- & 1.79e-01 & --  \\
1345 & 2.12e-02 & 1.24 & 1.59e-01 & 1.04  \\
3764 & 1.49e-02 & 0.98 & 1.10e-01 & 1.02  \\
25362 & 7.21e-03 & 1.11 & 5.51e-02 & 1.06  \\
48409 & 5.74e-03 & 1.16 & 4.39e-02 & 1.15  \\
111917 & 4.27e-03 & 0.96 & 3.28e-02 & 0.94  \\
187186 & 3.56e-03 & 1.06 & 2.74e-02 & 1.04  \\
\bottomrule
    \end{tabular}
    \caption{Convergence test: Analytic solution for $\Uzero$ and $\Bzero$.}
    \label{tab:UB-rates}
\end{table}

The recovered rates are as expected from the theory. We further show the discrete solution $\Bh$ and the discrete approximation of $\Hh$, given by $\curl U_{0,h}$, in Figure~\ref{fig:sphere-solution}. Note that there are some oscillations present in $\Hh$, mainly towards the boundary, as it is the discrete gradient of a \ac{fe} field.

\begin{figure}
    \newcommand{\wid}{0.9}
    \centering
    \begin{subfigure}{0.45\textwidth}
        \includegraphics[width=\wid\linewidth]{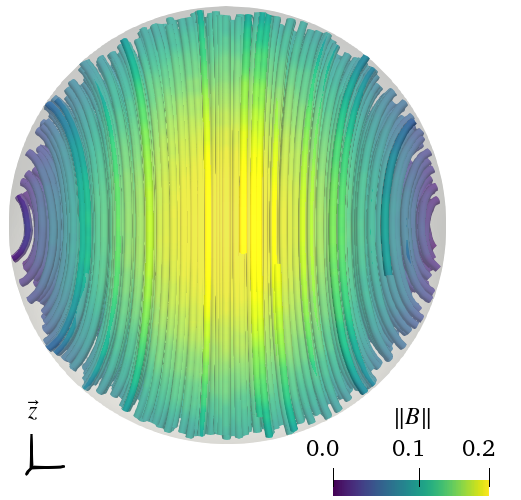}
        \caption{$\Bh$ solution.}
    \end{subfigure}
    \begin{subfigure}{0.45\textwidth}
        \includegraphics[width=\wid\linewidth]{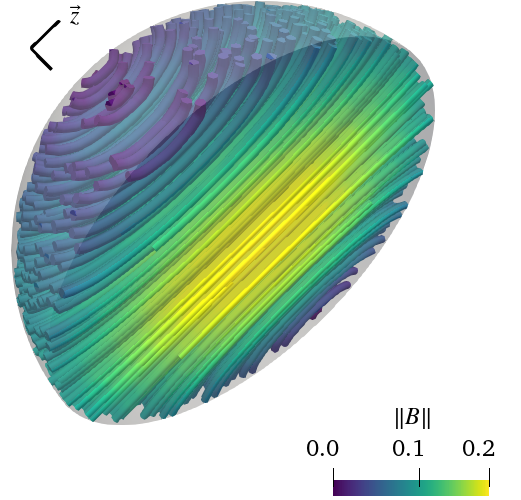}
        \caption{$\Bh$ solution tilted.}
    \end{subfigure}
    
    \begin{subfigure}{0.45\textwidth}
        \includegraphics[width=\wid\linewidth]{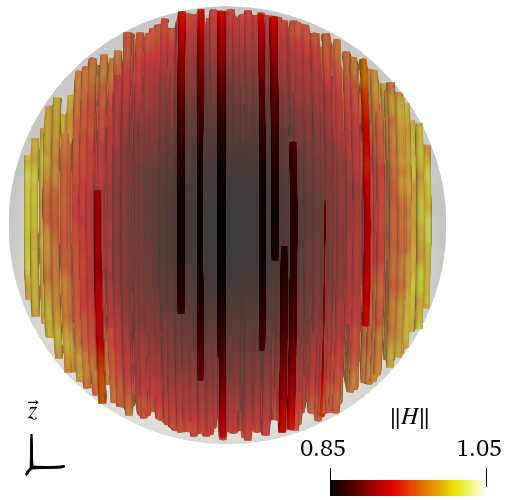}
        \caption{$\Hh$ solution.}
    \end{subfigure}
    \begin{subfigure}{0.45\textwidth}
        \includegraphics[width=\wid\linewidth]{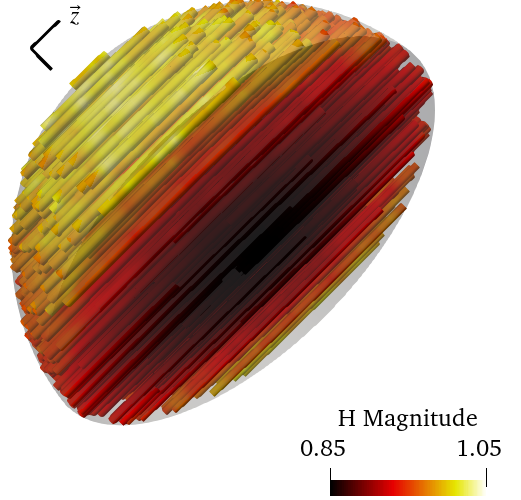}
        \caption{$\Hh$ solution tilted.}
    \end{subfigure}
    \caption{Theoretical validation test on the unit ball.}
    \label{fig:sphere-solution}
\end{figure}

\section{Application to the isoflux problem}\label{sec:ellipsoid}
We consider the prolate ellipsoid
$$
\Omega=\left\{(x,y,z)\in\RR^3:\frac{x^2+y^2}{a^2}+z^2<1\right\},
\qquad 0<a<1,
$$
and assume that the normalized applied magnetic field is
$$
\Hzeroex=\hat z.
$$
Thus, the major axis of the ellipsoid is aligned with the $z$-direction, while the two minor semi-axes, in the $x$- and $y$-directions, have length $a$. Since both the domain and the direction of the applied field are invariant under rotations around the major axis, the corresponding field $\Bzero$ is independent of the azimuthal angle and has no component in the azimuthal direction. As in the analysis of the isoflux problem for the ball carried out in \cites{AlaBroMon,Rom2}, this axisymmetry naturally reduces the problem to curves contained in two-dimensional sections determined by planes containing the major axis and lying entirely on one side of this axis. We work in the section determined by the $(x,z)$-plane and, without loss of generality, consider curves lying on the side $x\geq0$ of the major axis. The boundary of this section is the ellipse
$$
\frac{x^2}{a^2}+z^2=1.
$$

\smallskip

Let $\Gamma$ be an oriented curve contained in this section, lying entirely on one side of the major axis and with endpoints on the boundary of the ellipse. As illustrated in Figure~\ref{fig:vortex-line-sector}, we denote by $\Gamma_{\mathrm{bd}}$ the portion of the boundary of the ellipse joining the endpoints of $\Gamma$, oriented so that $\Gamma\cup\Gamma_{\mathrm{bd}}$ forms a closed curve. We then denote by $S_\Gamma$ the region enclosed by this curve. In particular,
$$
\Gamma_{\mathrm{bd}}=\partial S_\Gamma\cap\partial\Omega.
$$

\begin{figure}[ht!]
\centering
\includegraphics[width=0.62\linewidth]{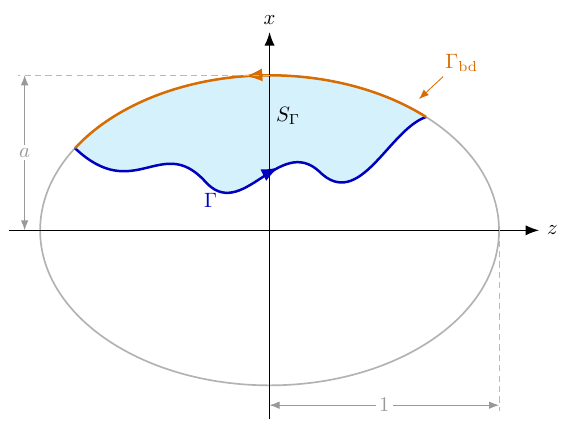}
\caption{An oriented curve $\Gamma$ lying on one side of the major axis and the region $S_\Gamma$ enclosed by $\Gamma$ and the corresponding boundary curve $\Gamma_{\mathrm{bd}}$.}
\label{fig:vortex-line-sector}
\end{figure}

The boundary condition $\Bzero\times\nu=0$ on $\partial\Omega$ is crucial here, since it implies that $\Bzero$ has no tangential component along $\Gamma_{\mathrm{bd}}$ and hence
$$
\int_{\Gamma_{\mathrm{bd}}}\Bzero\cdot d\ell=0.
$$
We may therefore close $\Gamma$ along the boundary without changing its line integral:
$$
\int_\Gamma\Bzero\cdot d\ell=\int_{\partial S_\Gamma}\Bzero\cdot d\ell.
$$
Applying Stokes' theorem in the oriented $(x,z)$-section yields
$$
\int_\Gamma\Bzero\cdot d\ell=\int_{S_\Gamma}\curl\Bzero\cdot\hat y\,dS,
$$
where $\hat y$ is the oriented unit normal to the section. The flux density $\curl\Bzero\cdot\hat y$ is nonnegative; see \cite{RSS}*{Section~3}.

Consequently, the argument used for the ball in \cites{AlaBroMon,Rom2} also applies in the present setting and shows that a curve whose length exceeds that of the major axis cannot be optimal, since its isoflux ratio is smaller than the one attained by the major axis. Moreover, this analysis allows us to restrict our numerical study to curves that are symmetric with respect to the $x$-axis, meet the boundary of the ellipse orthogonally, and have an associated region $S_\Gamma$ that is convex.

\smallskip

Motivated by these geometric properties, we introduce a two-parameter family of piecewise-linear competitors $\Gamma_{(x_0,\lambda)}$. The construction is illustrated in Figure~\ref{fig:isoflux-experiment} and described in detail next. The parameter $x_0$ determines the endpoints of the curve on the boundary of the ellipse, while $\lambda$ controls how far the curve follows the inward normal direction before turning parallel to the major axis.

\begin{figure}[ht!]
\centering
\includegraphics[width=0.7\linewidth]{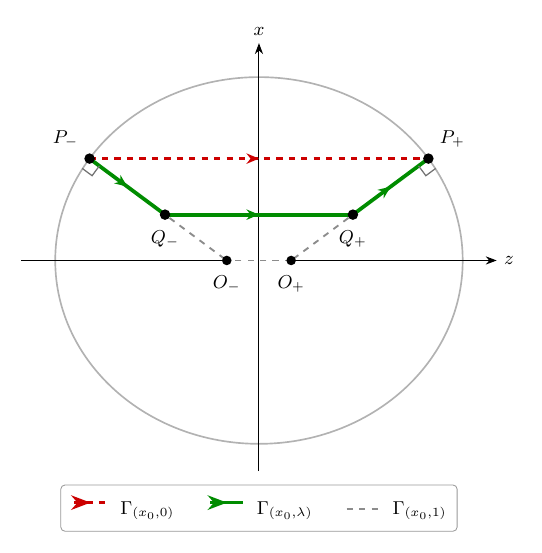}
\caption{Geometric construction of the family $\Gamma_{(x_0,\lambda)}$ in the $(x,z)$-section of the ellipsoid. The figure shows the limiting configuration $\Gamma_{(x_0,0)}$, an intermediate competitor $\Gamma_{(x_0,\lambda)}$, and the configuration $\Gamma_{(x_0,1)}$, which reaches and follows the major axis.}
\label{fig:isoflux-experiment}
\end{figure}

Fix $x_0\in(0,a)$ and define
$$
z_0=\sqrt{1-\left(\frac{x_0}{a}\right)^{\!2}},\qquad
P_\pm=(x_0,\pm z_0)\in\partial\Omega.
$$
For $\lambda\in[0,1]$, set
$$
z_\lambda=z_0(1-\lambda a^2), \qquad
Q_\pm=\bigl(x_0(1-\lambda),\pm z_\lambda\bigr).
$$
For $\lambda\in(0,1]$, the points $Q_\pm$ lie in the interior of $\Omega$, and the segments joining $P_\pm$ to $Q_\pm$ follow the inward normal directions to the ellipse at $P_\pm$. When $\lambda=1$, the points $Q_\pm$ reach the major axis at
$$
O_\pm=\bigl(0,\pm z_0(1-a^2)\bigr).
$$

For $\lambda\in(0,1]$, we define $\Gamma_{(x_0,\lambda)}$ as the piecewise-linear curve connecting, in order,
$$
P_-,\qquad Q_-,\qquad Q_+,\qquad P_+.
$$
These curves are symmetric with respect to the $x$-axis and meet the boundary of the ellipse orthogonally at $P_-$ and $P_+$. We also include the limiting configuration
$$
\Gamma_{(x_0,0)}=[P_-,P_+],
$$
which is parallel to the major axis. Unlike the curves corresponding to $\lambda\in(0,1]$, this limiting curve does not meet the boundary orthogonally; it is included to illustrate the full geometric interpolation. At the other endpoint,
$$
\Gamma_{(x_0,1)}
=
[P_-,O_-]\cup[O_-,O_+]\cup[O_+,P_+],
$$
which reaches and follows the major axis. Intermediate values of $\lambda$ interpolate between these two configurations.

Using the notation for the isoflux ratio presented in the Introduction, every curve in the family satisfies
$$
\R\bigl(\Gamma_{(x_0,\lambda)}\bigr)\leq\R_0.
$$
Although these piecewise-linear curves are not expected to be maximizers themselves, they provide a simple finite-dimensional family of competitors with which to investigate whether the major axis remains optimal.

\smallskip

For comparison with the major axis, we extend the notation by setting
$$
\Gamma_{(0,0)}\coloneqq \left\{(0,0,z)\in\RR^3:-1\leq z\leq1\right\}.
$$
Thus, $\Gamma_{(0,0)}$ denotes the major axis of the ellipsoid.

\smallskip

We numerically solve the London equation and subsequently compute $\Bh$. The resulting fields $\Bh$ and $\Hh$ are shown in Figure~\ref{fig:ellipsoid-solution}, which seem visually comparable to the corresponding results for the ball shown in Figure~\ref{fig:sphere-solution} and present similar oscillations in the discrete curl $\Hh$ towards the boundary.
\begin{figure}[ht!]
    \centering
    \newcommand{\wid}{0.9}
    \begin{subfigure}{0.45\textwidth}
        \includegraphics[width=\wid\linewidth]{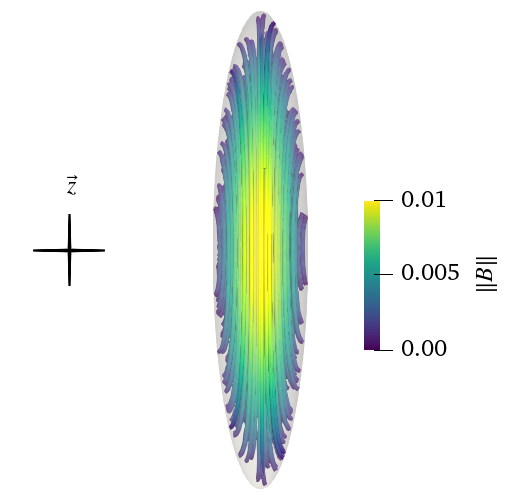}
        \caption{$\Bh$ solution.}
    \end{subfigure}
    \begin{subfigure}{0.45\textwidth}
        \includegraphics[width=\wid\linewidth]{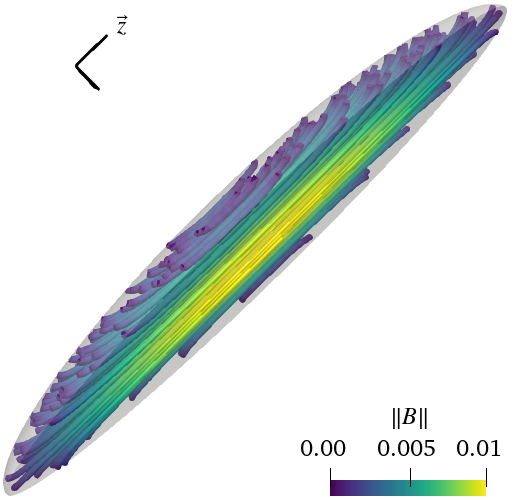}
        \caption{$\Bh$ solution tilted.}
    \end{subfigure}
    
    \begin{subfigure}{0.45\textwidth}
        \includegraphics[width=\wid\linewidth]{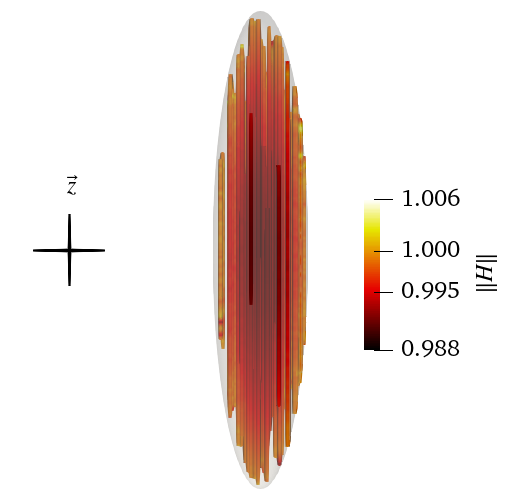}
        \caption{$\Hh$ solution.}
    \end{subfigure}
    \begin{subfigure}{0.45\textwidth}
        \includegraphics[width=\wid\linewidth]{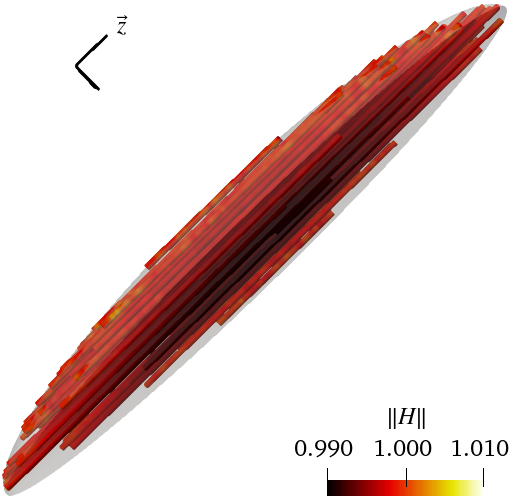}
        \caption{$\Hh$ solution tilted.}
    \end{subfigure}
    \caption{Discrete fields $\Hh$ and $\Bh$ for the ellipsoid, used to evaluate the isoflux ratio.}
    \label{fig:ellipsoid-solution}
\end{figure}

We then evaluate
$$
\R\bigl(\Gamma_{(x_0,\lambda)}\bigr)
$$
for a sample of $x_0$-values in both the unit ball and the ellipsoid with $a=0.2$. The left panel of Figure~\ref{fig:isoflux-ellipsoid} shows that, in the ball case, where $O_+=O_-$ coincides with the origin, none of the curves in the family considered has an isoflux ratio larger than that of the diameter aligned with the applied field, in agreement with its theoretically established optimality. By contrast, the right panel provides clear numerical evidence that the major axis of the elongated ellipsoid is not optimal, as several off-axis competitors attain a larger isoflux ratio.

\begin{figure}[ht!]
    \centering
    \newcommand{\wid}{1}
    \begin{subfigure}{0.45\textwidth}
        \centering
        \includegraphics[width=\wid\linewidth]{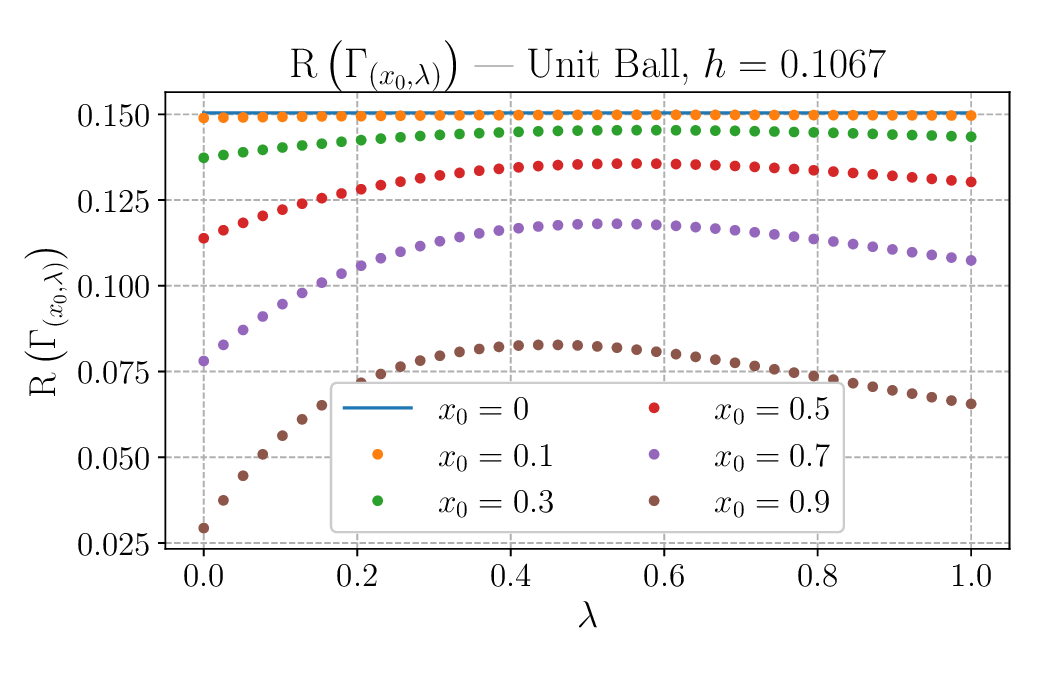}
        \caption{Unit ball.}
    \end{subfigure}
    \begin{subfigure}{0.45\textwidth}
        \centering
        \includegraphics[width=\wid\linewidth]{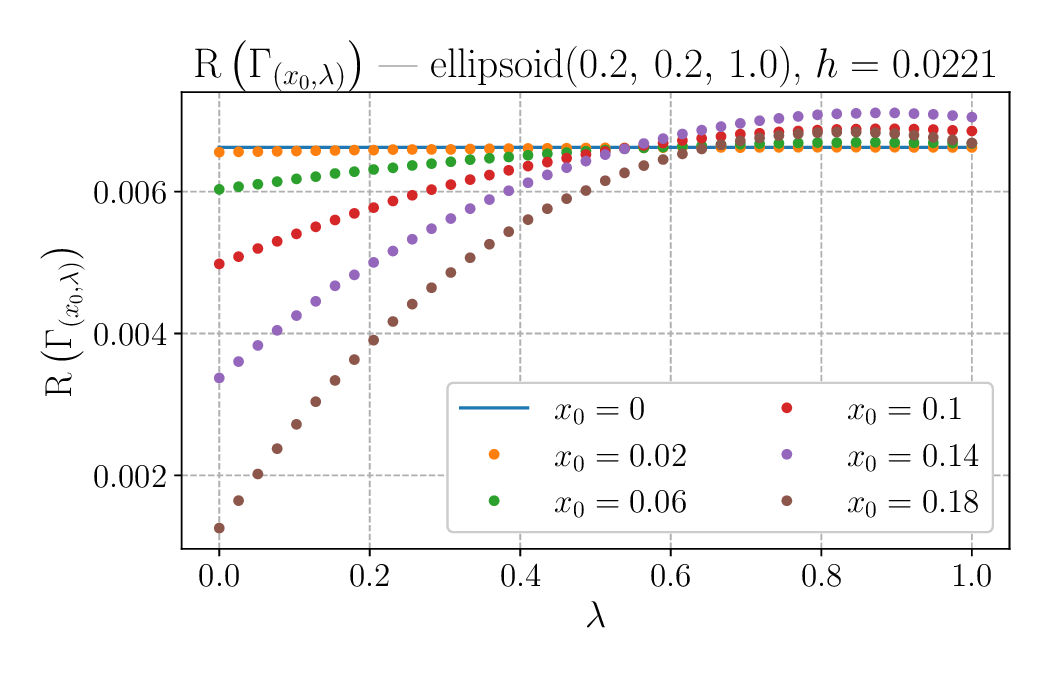}
        \caption{Ellipsoid with $a=0.2$.}
    \end{subfigure}
    \caption{Isoflux ratios of the competitors $\Gamma_{(x_0,\lambda)}$ for a sample of $x_0$-values. For the unit ball (left), none of the competitors considered outperforms the diameter aligned with the applied field. For the elongated ellipsoid (right), some off-axis competitors have a larger isoflux ratio than the major axis.}
    \label{fig:isoflux-ellipsoid}
\end{figure}

To identify the best competitor within the family considered for the ellipsoid, we evaluate $\R\bigl(\Gamma_{(x_0,\lambda)}\bigr)$ over a uniform $500\times500$ grid in the parameter space $(x_0,\lambda)$. The optimized parameters yield
$$
\R\bigl(\Gamma_{(x_0,\lambda)}\bigr)>\R\bigl(\Gamma_{(0,0)}\bigr),
$$
providing numerical evidence that the major axis is not optimal when the minor semi-axis is sufficiently small. The corresponding competitor is shown in Figure~\ref{fig:ellipsoid-full}.

\begin{figure}[ht!]
    \centering
    \includegraphics[width=\linewidth]{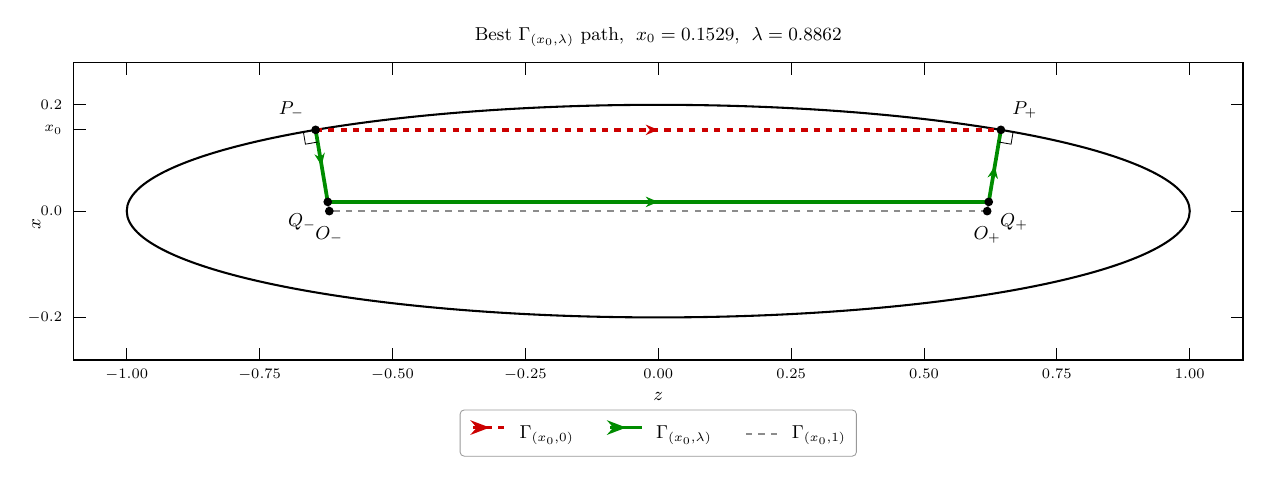}
    \caption{Best off-axis competitor within the family considered for the ellipsoid with $a=0.2$, with an isoflux ratio larger than that of the major axis.}
    \label{fig:ellipsoid-full}
\end{figure}

Although this comparison does not solve the full isoflux problem in ellipsoids, it provides numerical evidence that the major axis is not a maximizer when $a$ is sufficiently small. The geometry of the best competitors within the family suggests that an isoflux maximizer may instead take the form of a smooth U-shaped configuration, reminiscent of the U-vortices observed experimentally and numerically in rotating Bose--Einstein condensates \cites{BEC4,BEC3}; see also \cites{BEC1,BEC2,AftalionBook}. Moreover, rotating any such off-axis configuration around the major axis generates a continuous family of equivalent competitors. This suggests non-uniqueness and the presence of a degenerate rotational direction in the isoflux problem.

\appendix
\section{Explicit solution in a ball under a uniform normalized applied field}\label{sec:solball}
We recall the explicit solution of the London equation when $\Omega=B(0,R)$ is a ball of radius $R$ centered at the origin and $\Hzeroex=\hat z$ is the unit vector in the $z$-direction. This solution will serve as a benchmark for the numerical simulations. Using spherical coordinates $(r,\theta,\phi)$, where $r$ denotes the Euclidean distance from the origin, $\theta$ the azimuthal angle, and $\phi$ the polar angle, we obtain the following explicit formulas; see \cites{Lon,AlaBroMon}. 

For $r\geq R$,
$$
\Hzero=\hat z+ \frac{2M}{r^3}\cos\phi \hat r+\frac{M}{r^3}\sin\phi \hat \phi
=\left(1+\frac{2M}{r^3}\right)\cos\phi \hat r+\left(-1+\frac{M}{r^3}\right)\sin\phi \hat \phi,
$$  
where  
$$
M =-\frac{R^3}{2}\left(1-\frac{3}{R}\coth R+\frac{3}{R^2}\right).
$$  
For $r\leq R$, the expression becomes  
$$
\Hzero =\frac{3R}{r^2\sinh R}\left(\cosh r-\frac{\sinh r}r\right)\cos\phi \hat r
+\frac{3R}{2r^2\sinh R}\left(\cosh r-\frac{1+r^2}r \sinh r\right)\sin \phi \hat \phi.
$$  
The associated field $\Bzero$ is given by  
$$
\Bzero =-\frac{3R}{r^2\sinh R}\left(\cosh r-\frac{\sinh r}r\right)\cos\phi \hat r
-\frac{3R}{2r^2\sinh R}\left(\cosh r-\frac{1+r^2}r \sinh r\right)\sin \phi \hat \phi
-C\hat z,
$$  
where  
$$
C=\dfrac3{2R\sinh R}\left(\cosh R-\dfrac{1+R^2}R \sinh R\right).
$$
Finally, the vector field $\Uzero$ defined in~\eqref{eq:Uzero-definition} is given as follows. For $r\geq R$, 
$$
\Uzero=\frac{M}{r^2}\sin\phi\hat\theta,
$$
where $M$ is the same constant as above. For $r\leq R$,
$$
\Uzero=\frac{3R}{2\sinh R}\left(\frac{\cosh r}{r}-\frac{\sinh r}{r^2}\right)\sin\phi\hat\theta.
$$ 

\section{Boundary integral operators for the exterior problem}
\label{sec:BIOs-results}

This appendix collects the boundary integral operator framework used to handle
the exterior equation in \eqref{eq:Uzero-problem}. That is
\begin{equation}\label{eq:Uexterior}
	\begin{aligned}
		\curl\curl\Uzero
		&=0
		&&\text{in }\RR^3\setminus\overline{\Omega},\\
        \diver \Uzero &= 0
        &&\text{in }\RR^3\setminus\overline{\Omega},\\
		\lim\limits_{|x|\to\infty}|\Uzero(x)|
		&=0.&
	\end{aligned}
\end{equation}
We introduce the fundamental solution, layer potentials, and boundary integral operators. We then establish a representation formula for solutions of the exterior problem~\eqref{eq:Uexterior} that depends only on the boundary data of $\Uzero$. This allows us to derive the Calder\'on equations used in the \ac{fem-bem} coupling.

\subsection{Layer potentials and boundary integral operators}

The fundamental solution of the Laplace equation in $\RR^3$ is
$$G_0(x-y) = \frac{1}{4\pi|x-y|}.$$
Using $G_0$, we define the scalar single-layer potential, the vector
single-layer potential, and the vector double-layer potential by
\begin{equation*}
    \begin{aligned}
        \Psi_{\mathrm{SL}}(\lambda)(x) &\coloneqq
            \int_{\partial\Omega} G_0(x-y)\lambda(y)\,\dsy,
            && \lambda \in H^{-1/2}(\partial\Omega),\\
        \mathbf{\Psi}_{\mathrm{SL}}(\mathbf{u})(x) &\coloneqq
            \int_{\partial\Omega} G_0(x-y)\mathbf{u}(y)\,\dsy,
            && \mathbf{u} \in H^{-1/2}(\diver_{\partial\Omega}, \partial\Omega),\\
        \mathbf{\Psi}_{\mathrm{DL}}(\mathbf{u})(x) &\coloneqq
            \curl\int_{\partial\Omega} G_0(x-y)\mathbf{u}(y)\,\dsy,
            && \mathbf{u} \in H^{-1/2}(\diver_{\partial\Omega}, \partial\Omega).
    \end{aligned}
\end{equation*}
From these, we define the boundary integral operators
\begin{equation}\label{eq:BIOs}
    \begin{aligned}
        \mathbf{V}_0\lambda &\coloneqq
            \tfrac{1}{2}(\gammatau^+ + \gammatau^-)\mathbf{\Psi}_{\mathrm{SL}}\lambda,\\
        \mathbf{W}_0\lambda &\coloneqq
            \tfrac{1}{2}(\gammaN^+ + \gammaN^-)\mathbf{\Psi}_{\mathrm{DL}}\lambda,\\
        \mathbf{K}_0\lambda &\coloneqq
            \tfrac{1}{2}(\gammatau^+ + \gammatau^-)\mathbf{\Psi}_{\mathrm{DL}}\lambda.
    \end{aligned}
\end{equation}
These are bounded operators from $H^{-1/2}(\diver_{\partial\Omega}, \partial\Omega)$
to itself \cite{buffa2003boundary}. Moreover, $\mathbf{V}_0$ is elliptic and
$\mathbf{K}_0$ is compact on $H^{-1/2}(\diver_{\partial\Omega}, \partial\Omega)$; see
\cite{buffa2003boundary}.

\subsection{Representation formula}

We begin with two technical lemmas. The first is an identity relating the
normal trace of a curl to the surface divergence of the tangential trace.

\begin{lemma}\label{lemma:normal-curl}
    For $A \in H(\curl, \Omega)$, it holds
    \begin{equation}\label{div-surface}
        \curl A \cdot \nu|_{\partial\Omega} =
        \diver_{\partial\Omega}(A \times \nu|_{\partial\Omega}).
    \end{equation}
\end{lemma}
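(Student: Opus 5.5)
The plan is to prove the identity first for smooth fields and then extend it by density. The argument is an integration by parts: test $\curl A\cdot\nu$ and $\diver_{\partial\Omega}(A\times\nu)$ against an arbitrary scalar function on the boundary and check that both pairings agree.

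We start with $A\in [C^\infty(\overline\Omega)]^3$ and a test function $\varphi\in H^{1/2}(\partial\Omega)$. Let $\tilde\varphi\in H^1(\Omega)$ be any lifting with $\gamma_D\tilde\varphi=\varphi$, which exists by the definition of $H^{1/2}(\partial\Omega)$ as the range of $\gamma_D$. By the divergence theorem and $\diver\curl A=0$,
$$
\int_{\partial\Omega}(\curl A\cdot\nu)\,\varphi
=\int_\Omega \diver(\tilde\varphi\,\curl A)
=\int_\Omega \curl A\cdot\nabla\tilde\varphi .
$$
On the other hand, $\diver(A\times\nabla\tilde\varphi)=\nabla\tilde\varphi\cdot\curl A-A\cdot\curl\nabla\tilde\varphi=\curl A\cdot\nabla\tilde\varphi$. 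Applying the divergence theorem once more gives
$$
\int_\Omega \curl A\cdot\nabla\tilde\varphi=\int_{\partial\Omega}(A\times\nabla\tilde\varphi)\cdot\nu
=-\int_{\partial\Omega}(A\times\nu)\cdot\nabla_{\partial\Omega}\varphi .
$$
Here we used $(A\times\nabla\tilde\varphi)\cdot\nu=-(A\times\nu)\cdot\nabla\tilde\varphi$ and the fact that $A\times\nu$ is tangential, so only the surface gradient $\nabla_{\partial\Omega}\varphi$ contributes. The right-hand side is, by the definition of the surface divergence via duality (the surface integration by parts on the closed $C^2$ surface $\partial\Omega$), exactly $\langle \diver_{\partial\Omega}(A\times\nu),\varphi\rangle$. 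Since $\varphi$ is arbitrary, this proves \eqref{div-surface} for smooth $A$.

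Next we pass to general $A\in H(\curl,\Omega)$, using that $[C^\infty(\overline\Omega)]^3$ is dense in $H(\curl,\Omega)$. Both sides of \eqref{div-surface} are continuous as maps from $H(\curl,\Omega)$ into $H^{-1/2}(\partial\Omega)$. On the left, $\curl A\in H(\diver,\Omega)$ with $\|\curl A\|_{H(\diver)}=\|\curl A\|_{L^2}$, so the normal trace $\gamman^-$ applies continuously. On the right, $\gammatau^-:H(\curl,\Omega)\to H^{-1/2}(\diver_{\partial\Omega},\partial\Omega)$ is continuous, and by definition of this space $\diver_{\partial\Omega}$ maps it continuously into $H^{-1/2}(\partial\Omega)$. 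The identity therefore extends from the dense subset to all of $H(\curl,\Omega)$.

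The main obstacle is making the weak definitions consistent. The surface divergence on $H^{-1/2}_\times(\partial\Omega)$ and the trace $\gammatau^-$ must be understood in the sense of \cite{buffa2002traces}, so that the smooth computation indeed identifies the distributional $\diver_{\partial\Omega}$. A cleaner route is to take the computation above, with the pairing $\int_\Omega(\curl A\cdot\nabla\tilde\varphi)$ and $\varphi$ first restricted to $H^{1}(\partial\Omega)$ or smooth functions, as the defining identity directly for $A\in H(\curl,\Omega)$. Its right-hand side depends only on $\varphi$ and not on the lifting, because for $\tilde\varphi\in\mathring H^1(\Omega)$ it vanishes by the weak definition of $\curl$. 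This makes both sides equal as elements of $H^{-1/2}(\partial\Omega)$ without any smoothness of $A$.
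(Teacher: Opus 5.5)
The paper states this lemma without proof; it is quoted as the standard trace identity underlying the definition of $H^{-1/2}(\diver_{\partial\Omega},\partial\Omega)$, cf.\ \cite{buffa2002traces}. Your proof is correct and is the standard one: Green's formula for smooth $A$, surface integration by parts on the closed surface $\partial\Omega$, then density of $[C^\infty(\overline\Omega)]^3$ in $H(\curl,\Omega)$. Your sign also agrees with the paper's convention $\gammatau U = U\times\nu$.

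Two details are worth tightening.

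First, in the smooth step you take $\tilde\varphi$ to be an arbitrary $H^1(\Omega)$ lifting. Then $\nabla\tilde\varphi$ has no trace on $\partial\Omega$, so the pointwise rewriting $(A\times\nabla\tilde\varphi)\cdot\nu=-(A\times\nu)\cdot\nabla_{\partial\Omega}\varphi$ is only formal. For smooth $A$ both sides of the identity are continuous functions on $\partial\Omega$, so it suffices to test with smooth $\varphi$ and a smooth lifting.

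Second, the density step uses that $\gammatau^-$ maps $H(\curl,\Omega)$ continuously into $H^{-1/2}(\diver_{\partial\Omega},\partial\Omega)$. In the standard trace theory, the part of that mapping property concerning $\diver_{\partial\Omega}$ is typically \emph{derived} from the very identity you are proving. Taking it as given, as the paper does in Section~\ref{sec:functional}, is acceptable. A self-contained version instead passes to the limit in a weaker space. You only need continuity of $A\mapsto A\times\nu$ into $H^{-1/2}(\partial\Omega)^3$, which follows from Green's formula, and continuity of $\diver_{\partial\Omega}$ into $H^{-3/2}(\partial\Omega)$. The identity then shows a posteriori that $\diver_{\partial\Omega}(A\times\nu)\in H^{-1/2}(\partial\Omega)$. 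Your closing route does exactly this: it tests the Green formula defining the tangential trace with $\Phi=\nabla\tilde\varphi$, $\tilde\varphi\in H^2(\Omega)$, and uses $\curl\nabla\tilde\varphi=0$.
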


\begin{lemma}\label{div0}
    If $A \in H(\curl, \Omega) \cap H(\curl^2, \Omega)$ with $\curl^2 A = 0$,
    then $$\diver_{\partial\Omega}(\curl A \times \nu|_{\partial\Omega^-}) = 0.$$
\end{lemma}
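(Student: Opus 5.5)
The plan is to combine Lemma~\ref{lemma:normal-curl} with the vanishing of the normal trace of a curl of a curl. Set $W \coloneqq \curl A$. By hypothesis $W \in H(\curl,\Omega)$, since $A\in H(\curl^2,\Omega)$. Applying Lemma~\ref{lemma:normal-curl} to $W$ in place of $A$ gives
\begin{equation*}
\diver_{\partial\Omega}\bigl(\curl A\times\nu|_{\partial\Omega^-}\bigr)
= \diver_{\partial\Omega}\bigl(W\times\nu|_{\partial\Omega}\bigr)
= \curl W\cdot\nu|_{\partial\Omega}
= \curl\curl A\cdot\nu|_{\partial\Omega}.
\end{equation*}
Since $\curl^2 A=0$ in $\Omega$, the field $\curl W$ is the zero element of $H(\diver,\Omega)$. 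Its normal trace $\gamman^-(\curl W)\in H^{-1/2}(\partial\Omega)$ therefore vanishes, which yields the claim.

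The only point needing care is that both sides of this identity live in $H^{-1/2}(\partial\Omega)$ and are understood in the distributional (trace) sense. Lemma~\ref{lemma:normal-curl} is really an identity between the continuous maps
\begin{equation*}
\gamman^-\circ\curl \colon H(\curl,\Omega)\to H^{-1/2}(\partial\Omega)
\qquad\text{and}\qquad
\diver_{\partial\Omega}\circ\gammatau^-\colon H(\curl,\Omega)\to H^{-1/2}(\partial\Omega).
\end{equation*}
The first map is well defined because $\curl W\in H(\diver,\Omega)$ with $\diver\curl W=0$.

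If one prefers not to rely on the lemma in this generality, I would instead verify the identity weakly. Take a test function $\varphi\in H^{1/2}(\partial\Omega)$ and extend it to $\tilde\varphi\in H^1(\Omega)$. Green's formula for $H(\diver)$, applied to $\curl W$, gives
\begin{equation*}
\langle \curl W\cdot\nu,\varphi\rangle = \int_\Omega \curl W\cdot\nabla\tilde\varphi,
\end{equation*}
because $\diver\curl W = 0$. The integration-by-parts formula for $H(\curl)$, paired with $\nabla\tilde\varphi$, matches the same quantity with $-\langle W\times\nu,\nabla_{\partial\Omega}\varphi\rangle$, which by definition equals $\langle\diver_{\partial\Omega}(W\times\nu),\varphi\rangle$, up to the sign convention of the pairing. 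With $\curl W=0$, the left-hand side vanishes for every $\varphi$, and this concludes the proof.

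I expect no real obstacle. The main thing to watch is that the sign and orientation conventions are consistent; they do not affect a statement that something equals zero.
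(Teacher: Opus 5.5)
Your argument is correct and is exactly the paper's: apply Lemma~\ref{lemma:normal-curl} to $W=\curl A\in H(\curl,\Omega)$, then use $\curl^2 A=0$ to conclude that the normal trace vanishes, and hence so does $\diver_{\partial\Omega}(\curl A\times\nu)$. Your optional weak verification via the Green formulas for $H(\diver,\Omega)$ and $H(\curl,\Omega)$ is a sound, self-contained justification of the identity in Lemma~\ref{lemma:normal-curl}, which the paper states without proof.
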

    \begin{proof}
        We use the result of \eqref{div-surface} in Lemma~\ref{lemma:normal-curl} for the vector field $\curl A$. It follows that
        $$(\curl^2 A) \cdot \nu|_{\partial\Omega} = (\curl (\curl A)) \cdot \nu|_{\partial\Omega} =
        \diver_{\partial\Omega}(\curl A \times \nu|_{\partial\Omega^-}).$$
        Since $\curl^2 A = 0,$ we conclude that 
        $$ \diver_{\partial\Omega}(\curl A \times \nu|_{\partial\Omega^-}) = 0.$$
    \end{proof}
The result of Lemma~\ref{div0} illustrates that the natural space for $\curl \Uzero \times \nu |_{\partial\Omega^+}$, where $\Uzero$ satisfies \eqref{eq:Uexterior}, is the subspace of $H^{-1/2}(\diver_{\partial\Omega}, \partial\Omega)$ given by
\begin{equation}\label{eq:div0-trace-space}
 H^{-1/2}(\diver_{\partial\Omega}0, \partial\Omega) \coloneqq \left\{ \lambda\in H^{-1/2}(\diver_{\partial\Omega}, \partial\Omega) : \diver_{\partial\Omega}\lambda = 0  \right\}.   
\end{equation}
 
The following lemma relates the divergence of the vector single-layer potential
to the scalar single-layer potential applied to the surface divergence.

\begin{lemma}\label{lem:div-SL}
    For $\mathbf{u} \in H^{-1/2}(\diver_{\partial\Omega}, \partial\Omega)$,
    $$\diver\,\mathbf{\Psi}_{\mathrm{SL}}(\mathbf{u}) =
    \Psi_{\mathrm{SL}}(\diver_{\partial\Omega}\mathbf{u}).$$
\end{lemma}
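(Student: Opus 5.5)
The identity $\diver\,\mathbf{\Psi}_{\mathrm{SL}}(\mathbf{u}) = \Psi_{\mathrm{SL}}(\diver_{\partial\Omega}\mathbf{u})$ is a statement about $x\in\RR^3\setminus\partial\Omega$. The plan is to prove it first for smooth tangential densities by differentiating under the integral and integrating by parts on the closed surface $\partial\Omega$, and then to extend it to $H^{-1/2}(\diver_{\partial\Omega},\partial\Omega)$ by density and continuity.

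\emph{Smooth densities.} Take $\mathbf{u}$ a smooth tangential field on $\partial\Omega$ and fix $x\notin\partial\Omega$. The kernel $y\mapsto G_0(x-y)$ is then smooth on $\partial\Omega$, so we may differentiate under the integral:
$$
\diver_x \mathbf{\Psi}_{\mathrm{SL}}(\mathbf{u})(x)=\int_{\partial\Omega}\nabla_x G_0(x-y)\cdot\mathbf{u}(y)\,\dsy=-\int_{\partial\Omega}\nabla_y G_0(x-y)\cdot\mathbf{u}(y)\,\dsy .
$$
Since $\mathbf{u}$ is tangential, only the surface gradient contributes, so $\nabla_y G_0\cdot\mathbf{u}=\nabla_{\partial\Omega,y}G_0\cdot\mathbf{u}$. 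The surface integration-by-parts formula on the closed surface $\partial\Omega$ (no boundary terms) reads $\int_{\partial\Omega}\nabla_{\partial\Omega}\varphi\cdot\mathbf{u}=-\int_{\partial\Omega}\varphi\,\diver_{\partial\Omega}\mathbf{u}$. Applying it with $\varphi=G_0(x-\cdot)$ gives
$$
\diver_x \mathbf{\Psi}_{\mathrm{SL}}(\mathbf{u})(x)=\int_{\partial\Omega}G_0(x-y)\,\diver_{\partial\Omega}\mathbf{u}(y)\,\dsy=\Psi_{\mathrm{SL}}(\diver_{\partial\Omega}\mathbf{u})(x).
$$
This formula is valid because $\partial\Omega$ is $C^2$ and closed.

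\emph{Extension by density.} Smooth tangential fields are dense in $H^{-1/2}(\diver_{\partial\Omega},\partial\Omega)$ by the trace theory of \cite{buffa2002traces}. For fixed $x\notin\partial\Omega$, the maps
$$
\mathbf{u}\mapsto\diver\,\mathbf{\Psi}_{\mathrm{SL}}(\mathbf{u})(x), \qquad \mathbf{u}\mapsto\Psi_{\mathrm{SL}}(\diver_{\partial\Omega}\mathbf{u})(x)
$$
are both continuous on $H^{-1/2}(\diver_{\partial\Omega},\partial\Omega)$. The reason is that they are dualities of $\mathbf{u}$ or $\diver_{\partial\Omega}\mathbf{u}$ against the smooth functions $\nabla G_0(x-\cdot)$ and $G_0(x-\cdot)$. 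Here the integrals are read as duality pairings, using $H^{-1/2}_\times$ against tangential traces and $H^{-1/2}$ against $H^{1/2}$. Equality on a dense set then gives equality everywhere, pointwise for $x\notin\partial\Omega$. Since both sides are harmonic off $\partial\Omega$, this is equivalent to equality in the sense of distributions on $\RR^3\setminus\partial\Omega$.

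The main obstacle is justifying the surface integration by parts, and the duality pairings, at the $H^{-1/2}$ level rather than only for smooth $\mathbf{u}$. In the weak setting the correct identity is the defining duality
$$
\langle\diver_{\partial\Omega}\mathbf{u},\varphi\rangle=-\langle\mathbf{u},\nabla_{\partial\Omega}\varphi\rangle \quad \text{for } \varphi\in H^{3/2}(\partial\Omega).
$$
I would cite this from \cite{buffa2002traces} and apply it directly with $\varphi=G_0(x-\cdot)$, which is $C^\infty$ on $\partial\Omega$. This avoids density altogether, so the density step serves only as a fallback.
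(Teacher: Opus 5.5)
Your proof is correct. The paper states this lemma without proof; it is only invoked as a standard fact in the subsequent corollary, so there is no argument of the authors to compare against. Your route is the standard proof: write $\nabla_x G_0(x-y)=-\nabla_y G_0(x-y)$, drop the normal part because $\mathbf{u}$ is tangential, and use the defining duality $\langle\diver_{\partial\Omega}\mathbf{u},\varphi\rangle=-\langle\mathbf{u},\nabla_{\partial\Omega}\varphi\rangle$ with $\varphi=G_0(x-\cdot)|_{\partial\Omega}$.

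In a write-up, lead with the direct duality version and drop the density fallback. Two points should then be stated explicitly. First, for fixed $x\notin\partial\Omega$, the function $G_0(x-\cdot)$ is the trace of an $H^2(\Omega)$ function, hence an admissible test function; when $x\in\Omega$, multiply it by a cutoff vanishing near $x$ first. This is the relevant property, rather than being "$C^\infty$ on $\partial\Omega$", since the boundary is only $C^2$. Second, $\diver_x$ can be moved inside the pairing because the difference quotients in $x$ of $G_0(x-\cdot)$ converge, together with all their derivatives, uniformly on $\partial\Omega$.
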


We can now state the representation formula for the exterior problem. Similar results can also be found in \cites{buffa2003boundary,hiptmair2002symmetric}.
\begin{proposition}\label{prop:rep-formula}
      Any solution of \eqref{eq:Uexterior} admits the representation 
      \begin{equation*}
          \Uzero(x) = \mathbf{\Psi}_{\mathrm{DL}}(\gammatau^+\Uzero)(x)
                    - \mathbf{\Psi}_{\mathrm{SL}}(\gammaN^+\Uzero)(x)
                    + \nabla\Psi_{\mathrm{SL}}(\gamman^+\Uzero)(x),
          \qquad x \in \RR^3\setminus\overline{\Omega}.
      \end{equation*}
      Conversely, for any $\mathbf{u}, \mathbf{v} \in
      H^{-1/2}(\diver_{\partial\Omega}, \partial\Omega)$ and $\mu \in
      H^{-1/2}(\partial\Omega)$, the field
      $$V(x) \coloneqq \mathbf{\Psi}_{\mathrm{DL}}(\mathbf{u})(x)
                - \mathbf{\Psi}_{\mathrm{SL}}(\mathbf{v})(x)
                + \nabla\Psi_{\mathrm{SL}}(\mu)(x)$$
      satisfies $\curl\curl V = 0$ in $\RR^3\setminus\overline{\Omega}$ and
      $|V(x)|\to 0$ as $|x|\to\infty$.
  \end{proposition}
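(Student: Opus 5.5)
The plan is to prove the representation formula through a Stratton--Chu type Green identity on a truncated exterior domain. For the converse, we compute the curls of the layer potentials directly.

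Fix $x\in\RR^3\setminus\overline{\Omega}$ and a large ball $B_R$ containing $\overline\Omega$ and $x$, and set $D_R\coloneqq B_R\setminus\overline\Omega$. For smooth fields, I will use the vector identity
\[
-\Delta \Uzero=\curl\curl\Uzero-\nabla\diver\Uzero .
\]
Since $\curl\curl\Uzero=0$ and $\diver\Uzero=0$, each Cartesian component of $\Uzero$ is harmonic in the exterior. I will then apply the scalar Green representation componentwise on $D_R$ with kernel $G_0(x-\cdot)$. This produces boundary integrals of $G_0\,\partial_\nu\Uzero$ and $\partial_{\nu_y}G_0\,\Uzero$ over $\partial\Omega$ and $\partial B_R$.

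The core step is to regroup the normal derivative into traces. I will decompose $\Uzero=\nu\times(\Uzero\times\nu)+(\Uzero\cdot\nu)\nu$ on $\partial\Omega$ and use the standard identity (see \cite{buffa2003boundary}, \cite{hiptmair2002symmetric})
\[
\int_{\partial\Omega}\Bigl(G_0\,\partial_\nu \Uzero-\partial_{\nu_y}G_0\,\Uzero\Bigr)\,\dsy
=\int_{\partial\Omega}\Bigl(G_0\,(\curl\Uzero\times\nu)-\nabla_x\times\bigl(G_0\,(\Uzero\times\nu)\bigr)-\nabla_x G_0\,(\Uzero\cdot\nu)\Bigr)\,\dsy ,
\]
which holds modulo terms involving $\diver\Uzero$, and these vanish. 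In practice I would derive it from the integrated vector identity $\int_{D_R}(\curl\curl \Uzero\cdot G_0 e-\ldots)$ with constant vectors $e$, integrating by parts with $\curl$ and $\diver$ instead of $\nabla$. Sign bookkeeping comes from the orientation, since $\nu$ points into $D_R$ on $\partial\Omega$. Identifying the three terms gives $\mathbf{\Psi}_{\mathrm{DL}}(\gammatau^+\Uzero)$, $-\mathbf{\Psi}_{\mathrm{SL}}(\gammaN^+\Uzero)$ and $\nabla\Psi_{\mathrm{SL}}(\gamman^+\Uzero)$, using $\nabla_xG_0(x-y)=-\nabla_yG_0(x-y)$.

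Two further steps complete the argument. First, I will let $R\to\infty$ and show that the $\partial B_R$ contributions vanish. This is the main obstacle, because the hypothesis only gives $|\Uzero|\to0$. My approach is to use the fact that harmonic components tending to zero at infinity decay like $O(|x|^{-1})$ with gradients $O(|x|^{-2})$, via a Kelvin transform or the expansion of exterior harmonic functions. The sphere integrals are then $O(R^2\cdot R^{-1}\cdot R^{-2})\to0$. Second, I will extend the argument from smooth fields to the trace spaces by density, using the continuity of the potentials on $H^{-1/2}(\diver_{\partial\Omega},\partial\Omega)$ and $H^{-1/2}(\partial\Omega)$ \cite{buffa2003boundary}.

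For the converse, off $\partial\Omega$ the kernels are smooth. We have $\curl\curl\nabla\Psi_{\mathrm{SL}}(\mu)=0$ trivially. We also have $\curl\curl\mathbf{\Psi}_{\mathrm{SL}}(\mathbf v)=\nabla\diver\mathbf{\Psi}_{\mathrm{SL}}(\mathbf v)-\Delta\mathbf{\Psi}_{\mathrm{SL}}(\mathbf v)=\nabla\Psi_{\mathrm{SL}}(\diver_{\partial\Omega}\mathbf v)$ by Lemma~\ref{lem:div-SL} and harmonicity. As stated, this is only curl-free after a further curl, so I would check whether the claim requires $\mathbf v\in H^{-1/2}(\diver_{\partial\Omega}0,\partial\Omega)$; this is consistent with Lemma~\ref{div0}, and I would add that restriction if needed. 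Similarly, $\curl\curl\mathbf{\Psi}_{\mathrm{DL}}=\curl(\nabla\diver-\Delta)\mathbf{\Psi}_{\mathrm{SL}}=0$. Decay follows from $|G_0|,|\nabla G_0|=O(|x|^{-1})$ uniformly for $y\in\partial\Omega$.
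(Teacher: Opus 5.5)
\textbf{The main gap: the regrouping identity is false.} The step you call sign bookkeeping is where the argument breaks. Both sides of your identity are linear in $\nu$, so no choice of orientation can repair it. Use $\partial_\nu\Uzero=\curl\Uzero\times\nu+\sum_m\nu_m\nabla(\Uzero)_m$ and $\nabla_xG_0\times(\Uzero\times\nu)=-\partial_{\nu_y}G_0\,\Uzero-\nu\,(\nabla_xG_0\cdot\Uzero)$. Then integrate the tangential derivatives $(\nu_m\partial_i-\nu_i\partial_m)\bigl(G_0(\Uzero)_m\bigr)$ by parts over the closed surface. For $\diver\Uzero=0$ this gives
\[
\int_{\partial\Omega}\bigl(G_0\,\partial_\nu\Uzero-\partial_{\nu_y}G_0\,\Uzero\bigr)\,\dsy
=\int_{\partial\Omega}\Bigl(G_0\,(\curl\Uzero\times\nu)+\nabla_x\times\bigl(G_0\,(\Uzero\times\nu)\bigr)+\nabla_xG_0\,(\Uzero\cdot\nu)\Bigr)\,\dsy ,
\]
so the double-layer and gradient terms carry plus signs, not minus. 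Green's formula on $D_R$ gives $\Uzero(x)=-\int_{\partial\Omega}(G_0\,\partial_\nu\Uzero-\partial_{\nu_y}G_0\,\Uzero)\,\dsy$. Carried out correctly with the trace conventions of Section~\ref{sec:functional}, your plan therefore produces
\[
\Uzero=-\mathbf{\Psi}_{\mathrm{DL}}(\gammatau^+\Uzero)-\mathbf{\Psi}_{\mathrm{SL}}(\gammaN^+\Uzero)-\nabla\Psi_{\mathrm{SL}}(\gamman^+\Uzero),
\]
which is not the displayed formula. Two checks on the unit ball confirm this:
\begin{enumerate}
\item[(i)] For $\Uzero=\nabla|y|^{-1}$, only $\gamman^+\Uzero=-1$ is nonzero, and $\nabla\Psi_{\mathrm{SL}}(-1)=-\nabla|x|^{-1}=-\Uzero$.
\item[(ii)] For $\Uzero=\hat z\times y/|y|^3$ (up to the factor $M$, the exterior field of Appendix~\ref{sec:solball} with $R=1$), one computes $\mathbf{\Psi}_{\mathrm{DL}}(\gammatau^+\Uzero)=-\tfrac23\Uzero$ and $\mathbf{\Psi}_{\mathrm{SL}}(\gammaN^+\Uzero)=-\tfrac13\Uzero$. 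The stated formula then returns $-\tfrac13\Uzero$.
\end{enumerate}
So the first part of the statement cannot be proved as written. The paper gives no proof of its own; it only points to \cite{buffa2003boundary} and \cite{hiptmair2002symmetric}, whose trace and normal conventions differ. Your identity is exactly what would be needed to reach the stated signs, but it is not true. The rest of your outline is the standard route and is fine once the signs are corrected: componentwise harmonicity, $O(|x|^{-1})$ decay with $O(|x|^{-2})$ gradients via the Kelvin transform, and density for rough traces.

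\textbf{The converse.} You located the real issue, but you should state it as a correction rather than a possibility. In $\RR^3\setminus\overline{\Omega}$ all the layer potentials are harmonic, so
\[
\curl\curl V=\nabla\diver V=-\nabla\Psi_{\mathrm{SL}}(\diver_{\partial\Omega}\mathbf v).
\]
Suppose this vanished. Then $\Psi_{\mathrm{SL}}(\diver_{\partial\Omega}\mathbf v)$ would be constant on the exterior (connected, since $\partial\Omega$ is) and would decay, hence be zero there. By continuity of the single-layer potential across $\partial\Omega$, its Dirichlet trace would vanish. Injectivity of the Laplace single-layer operator $\gamma_D\Psi_{\mathrm{SL}}$ on $H^{-1/2}(\partial\Omega)$ in three dimensions then forces $\diver_{\partial\Omega}\mathbf v=0$. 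Hence the converse fails, for example, for $\mathbf v=\nabla_{\partial\Omega}f$ with $f$ nonconstant. The hypothesis $\mathbf v\in H^{-1/2}(\diver_{\partial\Omega}0,\partial\Omega)$ is necessary, and it is the only case the paper actually uses (cf.\ Lemma~\ref{div0}). For the decay claim, note that the densities are only distributions. You therefore need decay of $G_0(x-\cdot)$ and its $y$-derivatives in a smooth norm on $\partial\Omega$, not just pointwise bounds; this does hold.
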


  \begin{corollary}
      The field $V$ defined in Proposition~\ref{prop:rep-formula} is additionally
      divergence-free in $\RR^3\setminus\overline{\Omega}$ if and only if
      $\diver_{\partial\Omega}\mathbf{v} = 0$.
      \begin{proof}
          Since $\mathbf{\Psi}_{\mathrm{DL}}(\mathbf{u})$ is the image of a $\curl$ operator, we have
          $\diver\mathbf{\Psi}_{\mathrm{DL}}(\mathbf{u}) = 0$. Since $G_0$ is
          harmonic outside $\partial\Omega$, $\diver\nabla\Psi_{\mathrm{SL}}(\mu)
          = \Delta\Psi_{\mathrm{SL}}(\mu) = 0$. For the remaining term,
          Lemma~\ref{lem:div-SL} gives $\diver\mathbf{\Psi}_{\mathrm{SL}}(\mathbf{v})
          = \Psi_{\mathrm{SL}}(\diver_{\partial\Omega}\mathbf{v})$, which vanishes
          if and only if $\diver_{\partial\Omega}\mathbf{v} = 0$. In particular,
          taking $\mathbf{v} = \gammaN^+\Uzero$, Lemma~\ref{div0} guarantees
          $\diver_{\partial\Omega}(\gammaN^+\Uzero) = 0$ whenever
          $\curl\curl\Uzero = 0$.
      \end{proof}
  \end{corollary}

\subsection{Calder\'on equations}

Applying the trace operators $\gammatau^+$ and $\gammaN^+$ to
\eqref{eq:rep-formula} and using the jump relations for the layer potentials
\cite{buffa2003boundary}, one obtains two Calder\'on equations. Applying
$\gammatau^+$ yields
\begin{equation*}
    \gammatau^+\Uzero =
    \left(\tfrac{1}{2}\mathbf{I} + \mathbf{K}_0\right)(\gammatau^+\Uzero)
    - \mathbf{V}_0(\gammaN^+\Uzero)
    + \bcurl_{\partial\Omega}\Psi_{\mathrm{SL}}(\gamman^+\Uzero),
\end{equation*}
which rearranges to
\begin{equation*}
    \left(\tfrac{1}{2}\mathbf{I} - \mathbf{K}_0\right)(\gammatau^+\Uzero)
    + \mathbf{V}_0(\gammaN^+\Uzero)
    - \bcurl_{\partial\Omega}\Psi_{\mathrm{SL}}(\gamman^+\Uzero) = 0.
\end{equation*}
Applying $\gammaN^+$ yields
\begin{equation*}
    \gammaN^+\Uzero =
    \mathbf{W}_0(\gammatau^+\Uzero)
    + \left(\tfrac{1}{2}\mathbf{I} - \mathbf{K}_0\right)(\gammaN^+\Uzero),
\end{equation*}
which rearranges to
\begin{equation}\label{eq:calderon2Bis}
    -\mathbf{W}_0(\gammatau^+\Uzero)
    + \left(\tfrac{1}{2}\mathbf{I} + \mathbf{K}_0\right)(\gammaN^+\Uzero) = 0.
\end{equation}
Equation \eqref{eq:calderon2Bis} is a second-kind boundary integral equation for $\gammaN^+\Uzero$,
since $\mathbf{K}_0$ is compact \cite{buffa2003boundary}. This is the one
we use in the \ac{fem-bem} coupling; see Section~\ref{sec:numerical}.

\section*{Acknowledgments}
N.B was supported by Centro de Modelamiento Matem\'atico (CMM), Proyecto Basal FB210005. C.R. was supported by ANID FONDECYT 1231593.

\bibliography{references}

\begin{bibdiv}
\begin{biblist}

\bib{alnaes2015fenics}{article}{
      author={Aln{\ae}s, Martin~S.},
      author={Blechta, Jan},
      author={Hake, Johan},
      author={Johansson, August},
      author={Kehlet, Benjamin},
      author={Logg, Anders},
      author={Richardson, Chris~N.},
      author={Ring, Johannes},
      author={Rognes, Marie~E.},
      author={Wells, Garth~N.},
       title={The {FEniCS} project version 1.5},
        date={2015},
     journal={Arch. Numer. Softw.},
      volume={3},
      number={100},
       pages={9\ndash 23},
}

\bib{AlaBroMon}{article}{
      author={Alama, Stan},
      author={Bronsard, Lia},
      author={Montero, J.~Alberto},
       title={On the {G}inzburg-{L}andau model of a superconducting ball in a
  uniform field},
        date={2006},
        ISSN={0294-1449},
     journal={Ann. Inst. H. Poincar\'e Anal. Non Lin\'eaire},
      volume={23},
      number={2},
       pages={237\ndash 267},
         url={http://dx.doi.org/10.1016/j.anihpc.2005.03.004},
      review={\MR{2201153}},
}

\bib{BEC3}{article}{
      author={Aftalion, Amandine},
      author={Danaila, Ionut},
       title={Three-dimensional vortex configurations in a rotating
  {B}ose-{E}instein condensate},
        date={2003},
     journal={Phys. Rev. A},
      volume={68},
      number={2},
       pages={023603},
         url={https://doi.org/10.1103/PhysRevA.68.023603},
}

\bib{AftalionBook}{book}{
      author={Aftalion, Amandine},
       title={Vortices in {B}ose-{E}instein condensates},
      series={Progress in Nonlinear Differential Equations and their
  Applications},
   publisher={Birkh{\"a}user Boston, Inc.},
     address={Boston, MA},
        date={2006},
      volume={67},
        ISBN={978-0-8176-4392-8},
         url={https://doi.org/10.1007/0-8176-4492-X},
}

\bib{BEC2}{article}{
      author={Aftalion, Amandine},
      author={Jerrard, Robert~L.},
       title={Shape of vortices for a rotating {B}ose-{E}instein condensate},
        date={2002},
     journal={Phys. Rev. A},
      volume={66},
      number={2},
       pages={023611},
         url={https://doi.org/10.1103/PhysRevA.66.023611},
}

\bib{BEC1}{article}{
      author={Aftalion, Amandine},
      author={Rivi{\`e}re, Tristan},
       title={Vortex energy and vortex bending for a rotating {B}ose-{E}instein
  condensate},
        date={2001},
     journal={Phys. Rev. A},
      volume={64},
      number={4},
       pages={043611},
         url={https://doi.org/10.1103/PhysRevA.64.043611},
}

\bib{arnold2018finite}{book}{
      author={Arnold, Douglas~N.},
       title={Finite element exterior calculus},
      series={CBMS-NSF Regional Conference Series in Applied Mathematics},
   publisher={Society for Industrial and Applied Mathematics (SIAM),
  Philadelphia, PA},
        date={2018},
      volume={93},
        ISBN={978-1-611975-53-6},
         url={https://doi.org/10.1137/1.9781611975543.ch1},
      review={\MR{3908678}},
}

\bib{boffi2013mixed}{book}{
      author={Boffi, Daniele},
      author={Brezzi, Franco},
      author={Fortin, Michel},
       title={Mixed finite element methods and applications},
      series={Springer Series in Computational Mathematics},
   publisher={Springer, Heidelberg},
        date={2013},
      volume={44},
        ISBN={978-3-642-36518-8; 978-3-642-36519-5},
         url={https://doi.org/10.1007/978-3-642-36519-5},
      review={\MR{3097958}},
}

\bib{buffa2007dual}{article}{
      author={Buffa, Annalisa},
      author={Christiansen, Snorre~H.},
       title={A dual finite element complex on the barycentric refinement},
        date={2007},
        ISSN={0025-5718,1088-6842},
     journal={Math. Comp.},
      volume={76},
      number={260},
       pages={1743\ndash 1769},
         url={https://doi.org/10.1090/S0025-5718-07-01965-5},
      review={\MR{2336266}},
}

\bib{buffa2002traces}{article}{
      author={Buffa, A.},
      author={Costabel, M.},
      author={Sheen, D.},
       title={On traces for {${\bf H}({\bf curl},\Omega)$} in {L}ipschitz
  domains},
        date={2002},
        ISSN={0022-247X,1096-0813},
     journal={J. Math. Anal. Appl.},
      volume={276},
      number={2},
       pages={845\ndash 867},
         url={https://doi.org/10.1016/S0022-247X(02)00455-9},
      review={\MR{1944792}},
}

\bib{BCS}{article}{
      author={Bardeen, J.},
      author={Cooper, L.~N.},
      author={Schrieffer, J.~R.},
       title={Theory of superconductivity},
        date={1957},
     journal={Phys. Rev.},
      volume={108},
       pages={1175\ndash 1204},
         url={https://link.aps.org/doi/10.1103/PhysRev.108.1175},
}

\bib{buffa2003boundary}{article}{
      author={Buffa, A.},
      author={Hiptmair, R.},
      author={von Petersdorff, T.},
      author={Schwab, C.},
       title={Boundary element methods for {M}axwell transmission problems in
  {L}ipschitz domains},
        date={2003},
        ISSN={0029-599X,0945-3245},
     journal={Numer. Math.},
      volume={95},
      number={3},
       pages={459\ndash 485},
         url={https://doi.org/10.1007/s00211-002-0407-z},
      review={\MR{2012928}},
}

\bib{BalJerOrlSon2}{article}{
      author={Baldo, S.},
      author={Jerrard, R.~L.},
      author={Orlandi, G.},
      author={Soner, H.~M.},
       title={Vortex density models for superconductivity and superfluidity},
        date={2013},
        ISSN={0010-3616},
     journal={Comm. Math. Phys.},
      volume={318},
      number={1},
       pages={131\ndash 171},
         url={http://dx.doi.org/10.1007/s00220-012-1629-2},
      review={\MR{3017066}},
}

\bib{Brandt1994}{article}{
      author={Brandt, Ernst~Helmut},
       title={Thin superconductors in a perpendicular magnetic ac field:
  General formulation and strip geometry},
        date={1994},
     journal={Physical Review B},
      volume={49},
      number={13},
       pages={9024\ndash 9040},
}

\bib{Brandt1996}{article}{
      author={Brandt, Ernst~Helmut},
       title={Superconductors of finite thickness in a perpendicular magnetic
  field: Strips and slabs},
        date={1996},
     journal={Physical Review B},
      volume={54},
      number={6},
       pages={4246\ndash 4264},
}

\bib{brenner2008mathematical}{book}{
      author={Brenner, Susanne~C.},
      author={Scott, L.~Ridgway},
       title={The mathematical theory of finite element methods},
     edition={Third},
      series={Texts in Applied Mathematics},
   publisher={Springer, New York},
        date={2008},
      volume={15},
        ISBN={978-0-387-75933-3},
         url={https://doi.org/10.1007/978-0-387-75934-0},
      review={\MR{2373954}},
}

\bib{Betcke2021}{article}{
      author={Betcke, Timo},
      author={Scroggs, Matthew~W.},
       title={Bempp-cl: A fast python based just-in-time compiling boundary
  element library.},
        date={2021},
     journal={Journal of Open Source Software},
      volume={6},
      number={59},
       pages={2879},
         url={https://doi.org/10.21105/joss.02879},
}

\bib{balay2019petsc}{techreport}{
      author={Balay, Satish},
      author={others},
       title={{PETSc} users manual},
 institution={Argonne National Laboratory},
        date={2019},
      number={ANL-95/11, Revision 3.11},
         url={https://www.osti.gov/biblio/1577437},
}

\bib{ClemBrandt2005}{article}{
      author={Clem, John~R.},
      author={Brandt, Ernst~Helmut},
       title={Response of thin-film {SQUIDs} to applied fields and vortex
  fields: Linear {SQUIDs}},
        date={2005},
     journal={Physical Review B},
      volume={72},
      number={17},
       pages={174511},
}

\bib{elasmi2021johnson}{unpublished}{
      author={Elasmi, Mehdi},
      author={Erath, Christoph},
      author={Kurz, Stefan},
       title={The {J}ohnson--{N}\'ed\'elec {FEM}--{BEM} coupling for
  magnetostatic problems in the isogeometric framework},
        date={2021},
        note={Available at
  \href{https://arxiv.org/abs/2110.04150}{arXiv:2110.04150}},
}

\bib{FHSS}{article}{
      author={Frank, Rupert~L.},
      author={Hainzl, Christian},
      author={Seiringer, Robert},
      author={Solovej, Jan~Philip},
       title={Microscopic derivation of {G}inzburg-{L}andau theory},
        date={2012},
        ISSN={0894-0347},
     journal={J. Amer. Math. Soc.},
      volume={25},
      number={3},
       pages={667\ndash 713},
         url={https://doi.org/10.1090/S0894-0347-2012-00735-8},
      review={\MR{2904570}},
}

\bib{GinLan}{article}{
      author={Ginzburg, V.~L.},
      author={Landau, L.~D.},
       title={On the theory of superconductivity},
        date={1950},
     journal={Zh. Eksp. Teor. Fiz.},
      volume={20},
       pages={1064\ndash 1082},
        note={English translation in: \emph{Collected papers of L.D.Landau},
  Edited by D. Ter. Haar, Pergamon Press, Oxford 1965, pp. 546--568},
}

\bib{hackbusch2013multi}{book}{
      author={Hackbusch, Wolfgang},
       title={Multi-grid methods and applications},
      series={Springer Series in Computational Mathematics},
   publisher={Springer-Verlag},
     address={Berlin},
        date={1985},
      volume={4},
}

\bib{hiptmair2002finite}{article}{
      author={Hiptmair, R.},
       title={Finite elements in computational electromagnetism},
        date={2002},
        ISSN={0962-4929,1474-0508},
     journal={Acta Numer.},
      volume={11},
       pages={237\ndash 339},
         url={https://doi.org/10.1017/S0962492902000041},
      review={\MR{2009375}},
}

\bib{hiptmair2002symmetric}{article}{
      author={Hiptmair, R.},
       title={Symmetric coupling for eddy current problems},
        date={2002},
        ISSN={0036-1429,1095-7170},
     journal={SIAM J. Numer. Anal.},
      volume={40},
      number={1},
       pages={41\ndash 65},
         url={https://doi.org/10.1137/S0036142900380467},
      review={\MR{1921909}},
}

\bib{fumio1987mixed}{inproceedings}{
      author={Kikuchi, Fumio},
       title={Mixed and penalty formulations for finite element analysis of an
  eigenvalue problem in electromagnetism},
        date={1987},
   booktitle={Proceedings of the first world congress on computational
  mechanics ({A}ustin, {T}ex., 1986)},
      volume={64},
       pages={509\ndash 521},
         url={https://doi.org/10.1016/0045-7825(87)90053-3},
      review={\MR{912525}},
}

\bib{KhapaevEtAl2003}{article}{
      author={Khapaev, M.~M.},
      author={Kupriyanov, M.~Yu.},
      author={Goldobin, E.},
      author={Siegel, M.},
       title={Current distribution simulation for superconducting multi-layer
  structures},
        date={2003},
     journal={Superconductor Science and Technology},
      volume={16},
      number={1},
       pages={24\ndash 27},
}

\bib{London1935}{article}{
      author={London, F.},
      author={London, H.},
       title={The electromagnetic equations of the superconductor},
        date={1935},
     journal={Proceedings of the Royal Society of London. Series A},
      volume={149},
       pages={71\ndash 88},
}

\bib{Lon}{book}{
      author={London, F.},
       title={Superfluids},
      series={Structure of matter series},
   publisher={Wiley, New York},
        date={1950},
      number={v. 1},
         url={https://books.google.de/books?id=DtvvAAAAMAAJ},
}

\bib{monk2003finite}{book}{
      author={Monk, Peter},
       title={Finite element methods for {M}axwell's equations},
      series={Numerical Mathematics and Scientific Computation},
   publisher={Oxford University Press, New York},
        date={2003},
        ISBN={0-19-850888-3},
         url={https://doi.org/10.1093/acprof:oso/9780198508885.001.0001},
      review={\MR{2059447}},
}

\bib{BEC4}{article}{
      author={Rosenbusch, P.},
      author={Bretin, V.},
      author={Dalibard, J.},
       title={Dynamics of a single vortex line in a {B}ose-{E}instein
  condensate},
        date={2002},
     journal={Phys. Rev. Lett.},
      volume={89},
      number={20},
       pages={200403},
         url={https://doi.org/10.1103/PhysRevLett.89.200403},
}

\bib{Rom2}{article}{
      author={Rom\'{a}n, Carlos},
       title={On the first critical field in the three dimensional
  {G}inzburg-{L}andau model of superconductivity},
        date={2019},
        ISSN={0010-3616},
     journal={Comm. Math. Phys.},
      volume={367},
      number={1},
       pages={317\ndash 349},
         url={https://doi.org/10.1007/s00220-019-03306-w},
      review={\MR{3933412}},
}

\bib{RSS}{article}{
      author={Rom\'an, Carlos},
      author={Sandier, Etienne},
      author={Serfaty, Sylvia},
       title={Bounded vorticity for the 3{D} {G}inzburg-{L}andau model and an
  isoflux problem},
        date={2023},
        ISSN={0024-6115,1460-244X},
     journal={Proc. Lond. Math. Soc. (3)},
      volume={126},
      number={3},
       pages={1015\ndash 1062},
         url={https://doi.org/10.1112/plms.12505},
      review={\MR{4563866}},
}

\bib{RSS2}{unpublished}{
      author={Rom\'{a}n, Carlos},
      author={Sandier, Etienne},
      author={Serfaty, Sylvia},
       title={Vortex lines interaction in the three-dimensional magnetic
  {G}inzburg--{Landau} model},
        date={2025},
        note={Available at
  \href{https://arxiv.org/abs/2510.14910}{arXiv:2510.14910}},
}

\bib{sauter2010boundary}{book}{
      author={Sauter, Stefan~A.},
      author={Schwab, Christoph},
       title={Boundary element methods},
      series={Springer Series in Computational Mathematics},
   publisher={Springer-Verlag, Berlin},
        date={2011},
      volume={39},
        ISBN={978-3-540-68092-5},
         url={https://doi.org/10.1007/978-3-540-68093-2},
        note={Translated and expanded from the 2004 German original},
      review={\MR{2743235}},
}

\bib{saad1986gmres}{article}{
      author={Saad, Youcef},
      author={Schultz, Martin~H.},
       title={G{MRES}: a generalized minimal residual algorithm for solving
  nonsymmetric linear systems},
        date={1986},
        ISSN={0196-5204},
     journal={SIAM J. Sci. Statist. Comput.},
      volume={7},
      number={3},
       pages={856\ndash 869},
         url={https://doi.org/10.1137/0907058},
      review={\MR{848568}},
}

\bib{steinbach2011note}{article}{
      author={Steinbach, O.},
       title={A note on the stable one-equation coupling of finite and boundary
  elements},
        date={2011},
        ISSN={0036-1429,1095-7170},
     journal={SIAM J. Numer. Anal.},
      volume={49},
      number={4},
       pages={1521\ndash 1531},
         url={https://doi.org/10.1137/090762701},
      review={\MR{2831059}},
}

\end{biblist}
\end{bibdiv}

\end{document}